\theoremstyle{plain}
\newtheorem{Thm}{Theorem}
\newtheorem{Prop}[Thm]{Proposition}
\newtheorem{Lem}[Thm]{Lemma}
\newcommand{\z}{\textstyle}
\newcommand{\s}[1]{\scalebox{0.9}{$#1$}}\newcommand{\sq}[1]{\s{\sqrt{#1}}}
\newcommand{\vk}{\varkappa}
\newcommand{\ia}{{\rm a}}
\newcommand{\iC}{{\rm C}}\newcommand{\iE}{{\rm E}}\newcommand{\iF}{{\rm F}}
\newcommand{\cM}{\mathcal{M}}\newcommand{\cP}{\mathcal{P}}\newcommand{\cS}{\mathcal{S}}
\newcommand{\C}{\mathbb{C}}\renewcommand{\P}{\mathbb{P}}
\newcommand{\Q}{\mathbb{Q}}\newcommand{\Z}{\mathbb{Z}}
\newcommand{\BM}{\begin{smallmatrix}}\newcommand{\EM}{\end{smallmatrix}}
\newcommand{\BC}{\begin{cases}}\newcommand{\EC}{\end{cases}}
\newcommand{\VS}[1]{\z\sum\limits_{#1}}
\newcommand{\VP}[1]{\z\prod\limits_{#1}}
\newcommand{\VT}[1]{\z\bigoplus\limits_{#1}}
\newcommand{\6}{\;\;\;\;\;\;}
\begin{document}

\title{An explicit representation of primitive forms}
\author{SAITO Hayato}
\author{SUDA Tomohiko}
\address{Department of Mathematics, Hokkaido University,
Sapporo 060-0810, Japan}
\email{s053019@math.sci.hokudai.ac.jp}

\maketitle

\section{Introduction}

For positive integers $k$ and $N$, let $\cM_k(N)$ be the space of all modular forms of weight $k$ with respect to the congruence subgroup
\[\Gamma_0(N)=\{(\BM a&b\\c&d\EM)\in{\rm SL}_2(\Z) \,|\, c\in N\Z\},\]
and $\cS_k(N)$ be the space of all cuspforms in $\cM_k(N)$. We denote by $\cS_k^0(N)$ the subspace of $\cS_k(N)$ consisting of all newforms (cf. \cite[p162]{Mi}), and $\cP_k(N)$ the set of all primitive forms (cf. \cite[p164]{Mi}) in $\cS_k^0(N)$. The purpose of this paper is to represent $\cP_k(N)$ in terms of some Eisenstein series, for $N=1,2,3,4,6,8,9$.

We put $\P(N)$ the set of all prime divisors of $N$, and $N^\times=\VP{p\in\P(N),\,p^2\nmid N}p$. For each divisor $i$ of $N^\times$, we define
\[\cP_k(N;i)=\big\{f\in\cP_k(N) \,\big|\, p\in\P(i)\Longrightarrow\ia_p(f)=-p^\vk,\;p\in\P(\z\frac{N^\times}i)\Longrightarrow\ia_p(f)=p^\vk\big\},\]
where $\ia_l(f)$ is the $l$-th Fourier coefficient of $f$ and $\vk=\frac k2-1$. Note that for each $f\in\cP_k(N)$ and $p\in\P(N)$,
\[\ia_p(f)=\BC \pm p^\vk &\text{if }p^2\nmid N, \\ 0 &\text{if }p^2|N, \EC\]
(cf. Miyake[6, Theorem 4.6.17]). We can calculate $\ia_l(\sum \cP_k(N;i))$, for each $l\ge1$, by my results \cite{S}. We consider the twist
\[f\otimes\rho=\VS{n}\rho(n)\ia_n(f)q^n\]
for a Dirichlet character $\rho$ mod $N$, and define
\[\cP_k^1(N)=\big\{f\otimes\rho \,\big|\, f\in\z\bigcup\limits_{N\neq M|N}\cP_k(M),\,\bm1_S\neq\rho:(\Z/S\Z)^\times\to\C^\times\big\}\]
where $S=\max\{n \,|\, N\in n^2\Z\}$, and $\cP_k^0(N)=\cP_k(N)\setminus\cP_k^1(N)$. Then, we see $\cP_k^1(N)=\emptyset$ for $N=1,2,3,4,6,8$, and
\[\cP_k^1(9)=\{f\otimes\rho_3 \,|\, f\in\cP_k(1)\cup\cP_k(3)\}\subset\cP_k(9),\]
where $\rho_3$ is the non-trivial Dirichlet character mod 3 (cf. \cite[Thorem 3.1 and Corollaly 3.1]{AL}). We note that each $f\in\cP_k^0(9)$ has CM by $\rho_3$. We remark $\cP_k^0(16)=\emptyset$ for example, see also \cite{FS}. 

The graded ring $\cM(N)=\VT{k}\cM_k(N)$ is studied in \cite{SS}. We use the results of the paper. We ragard $\cM(N)$ as a subring of $\C[[q]]$. Note that $\cS(N)=\VT{k}\cS_k(N)$ is an ideal of $\cM(N)$.

\section{first primitive forms}

For $n\ge0$, we write down the following formulas (cf. \cite[Example 4,20,21]{S}):
\[\begin{array}{|c|c|cc|c|c|}\hline
k&\#\cP_k(1)&\#\cP_k(3;1)&\#\cP_k(3;3)&\#\cP_k(4)&\#\cP_k^0(9)\\\hline
2+12n&-\iota+n&\iota+n&n&n&2n\\
4+12n&n&n&n&n&1+2n\\
6+12n&n&1+n&n&1+n&2n\\\hline
8+12n&n&n&1+n&n&2+2n\\
10+12n&n&1+n&1+n&1+n&1+2n\\
12+12n&1+n&n&1+n&1+n&2+2n\\\hline
\end{array}\]
\[\begin{array}{|c|cc|cccc|}\hline
k&\#\cP_k(2;1)&\#\cP_k(2;2)&\#\cP_k(6;1)&\#\cP_k(6;2)&\#\cP_k(6;3)&\#\cP_k(6;6)\\\hline
2+24n&\iota+n&n&-\iota+n&n&n&n\\
4+24n&n&n&n&n&n&1+n\\
6+24n&n&n&n&n&1+n&n\\
8+24n&n&1+n&1+n&n&n&n\\
10+24n&1+n&n&n&1+n&n&n\\
12+24n&n&n&1+n&1+n&n&1+n\\\hline
14+24n&1+n&1+n&n&n&1+n&n\\
16+24n&n&1+n&1+n&n&1+n&1+n\\
18+24n&1+n&n&n&1+n&1+n&1+n\\
20+24n&1+n&1+n&1+n&1+n&n&1+n\\
22+24n&1+n&1+n&1+n&1+n&1+n&n\\
24+24n&n&1+n&2+n&1+n&1+n&1+n\\\hline
\end{array}\]
\[\#\cP_k(8)=\z\frac14(k-1)+\frac14(-1)^{\frac k2},\]
where $\iota=\min(1,n)$.

Let $B_k$ be the $k$-th Bernoulli number and
\[\iE_k=1-\z\frac{2k}{B_k}\VS{n=1}^\infty\Big(\VS{d|n}d^{k-1}\Big)q^n.\]
Then we see $\iE_k\in\cM_k(1)$ for $k\ge4$ and $\iE_8=\iE_4^2$, $\iE_{10}=\iE_4\iE_6$, $\iE_{14}=\iE_4^2\iE_6$. For $N>1$, we put
\[\iC_N=\z\frac1{(N-1,24)}(N\iE_2^{(N)}-\iE_2)\in\cM_2(N).\]
where $f^{(h)}(q)=f(q^h)$, and
\begin{align*}\iF_3&=1+6\VS{n}\Big(\VS{d|n}\rho_3(d)\Big)q^n\in\cM_1(3,\rho_3),\\
\iF_4&=1+4\VS{n}\Big(\VS{d|n}\rho_4(d)\Big)q^n\in\cM_1(4,\rho_4).\end{align*}
Moreover, put
\begin{align*}\alpha_6&=\z\frac16(\iF_3-\iF_3^{(2)}),\\
\alpha_8&=\z\frac14(\iF_4-\iF_4^{(2)}),\\
\alpha_9&=\z\frac16(\iF_3-\iF_3^{(3)}),\end{align*}
and $\alpha_4=\iF_4^{(2)}\alpha_8$, $\alpha_2=\iC_4\alpha_4$, $\alpha_3=\iC_9\alpha_9$.

For a graded ring $R=\VT{k=0}^\infty R_k$ and $n_1,\cdots,n_r>0$, we define
\[S=R[X_1,\cdots,X_m]^{[n_1,\cdots,n_m]}\]
to be a ring $R[X_1,\cdots,X_m]$ graded as $X_i\in S_{n_i}$. Also for a graded-ring $R$, we define a graded-ring
\[R_{2\Z}=\VT{k:{\rm even}}R_k.\]
\begin{Prop}We have naturally
\begin{align*}\cM(1)&\simeq\C[\iE_4,\iE_6]^{[4,6]},\\
\cM(2)&\simeq\C[\iC_2,\alpha_2]^{[2,4]},\\
\cM(3)&\simeq(\C[\iF_3,\alpha_3]^{[1,3]})_{2\Z},\\
\cM(4)&\simeq(\C[\iF_4,\alpha_4]^{[1,2]})_{2\Z},\\
\cM(6)&\simeq(\C[\iF_3,\alpha_6]^{[1,1]})_{2\Z},\\
\cM(8)&\simeq(\C[\iF_4,\alpha_8]^{[1,1]})_{2\Z},\\
\cM(9)&\simeq(\C[\iF_3,\alpha_9]^{[1,1]})_{2\Z}.\end{align*}
\end{Prop}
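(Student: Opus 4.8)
The plan is to realise each displayed isomorphism as an explicit graded ring homomorphism and prove it is bijective weight by weight. Fix $N\in\{1,2,3,4,6,8,9\}$, let $(g_1,g_2)$ denote the corresponding pair of generators — $(\iE_4,\iE_6)$, $(\iC_2,\alpha_2)$, $(\iF_3,\alpha_3)$, $(\iF_4,\alpha_4)$, $(\iF_3,\alpha_6)$, $(\iF_4,\alpha_8)$, $(\iF_3,\alpha_9)$ for $N=1,2,3,4,6,8,9$ respectively — and let $(n_1,n_2)$ be their displayed weights. Sending $X_i\mapsto g_i$ defines a graded ring homomorphism $\Phi_N\colon\C[X_1,X_2]^{[n_1,n_2]}\to\C[[q]]$. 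I would establish: (a) $\Phi_N$ takes values in modular forms, and its restriction to the even part takes values in $\cM(N)$; (b) $g_1$ and $g_2$ are algebraically independent over $\C$, so $\Phi_N$ is injective; (c) $\dim\cM_k(N)=\dim\big(\C[X_1,X_2]^{[n_1,n_2]}\big)_k$ for every even $k$. Since $\cM_k(N)=0$ for odd $k$ (as $-I\in\Gamma_0(N)$), (b) and (c) together say that in each weight the restriction of $\Phi_N$ to the even part is an injection into $\cM(N)$ of equal finite dimension, hence an isomorphism onto $\cM(N)$. For $N=1$ this is the classical structure theorem for $\cM(\mathrm{SL}_2(\Z))$, and the identities $\iE_8=\iE_4^2$, $\iE_{10}=\iE_4\iE_6$, $\iE_{14}=\iE_4^2\iE_6$ recorded above are the instances of it we reuse; so suppose $N>1$ from now on.

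For (a): from the stated memberships $\iC_N\in\cM_2(N)$, $\iF_3\in\cM_1(3,\rho_3)$, $\iF_4\in\cM_1(4,\rho_4)$, from the fact that $f\mapsto f^{(h)}$ carries $\cM_k(M,\chi)$ into $\cM_k(Mh,\chi)$, and from closure of modular forms under multiplication, each $g_i$ is a holomorphic modular form of weight $n_i$ with quadratic nebentypus for $\Gamma_0$ of the correct level $N$ — \emph{except} for $\alpha_2,\alpha_3,\alpha_4$, which the definitions only place at a level larger than $N$ (through the auxiliary forms $\iF_4^{(2)}$, $\iF_3^{(3)}$). The genuine content of (a) is this \emph{level descent}: one must show $\alpha_4\in\cM_2(4)$, $\alpha_3\in\cM_3(3,\rho_3)$ and $\alpha_2\in\cM_4(2)$. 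I would prove these by a Sturm-bound comparison: each is already a holomorphic modular form for the larger group, and it agrees, on a set of Fourier coefficients exceeding the Sturm bound of the smaller-level space, with an explicitly written element of that smaller space (built from Eisenstein series and products of forms of lower weight), whence the two coincide. Granting (a), a short case-by-case check shows that the nebentypus of a monomial $g_1^ig_2^j$ is trivial exactly when its weight $in_1+jn_2$ is even; together with $\cM_k(N)=0$ for odd $k$, this is precisely why the truncation $(\,\cdot\,)_{2\Z}$ occurs for $N=3,4,6,8,9$ and is unnecessary for $N=2$ (where $n_1=2$ and $n_2=4$).

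For (b): suppose $P(g_1,g_2)=0$ for a nonzero homogeneous $P$ of weight $k$. With $d=\gcd(n_1,n_2)$, dividing the relation by a suitable monomial in $g_1,g_2$ turns it into a one-variable polynomial identity for the weight-zero meromorphic modular function $u_N=g_1^{\,n_2/d}/g_2^{\,n_1/d}$ on $X_0(N)$, forcing $u_N$ to be algebraic over $\C$, i.e.\ constant. But $u_N$ is non-constant: $g_1$ has Fourier constant term $1$, whereas $g_2$ — one of the $\alpha$'s — carries a factor of the form $f-f^{(h)}$ and hence vanishes at the cusp $\infty$, so $u_N$ has a pole there. Thus $\Phi_N$, and a fortiori its restriction to the even part, is injective. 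For (c) I would use the dimension formula for $\cM_k(\Gamma_0(N))$ (cf.\ \cite{Mi}), together with the facts that $X_0(N)$ has genus $0$ for $N\le9$ and the standard counts of cusps and of elliptic points of orders $2$ and $3$ of $\Gamma_0(N)$, $N=2,3,4,6,8,9$; a short computation in each case yields $\dim\cM_k(N)$ as a quasi-polynomial in $k$ coinciding with $\#\{(i,j)\in\Z_{\ge0}^2:in_1+jn_2=k\}$ in every even weight. Alternatively, \cite{SS} already determines $\cM(N)$ abstractly, so (c) can be read off there and surjectivity of $\Phi_N$ onto $\cM(N)$ reduces to checking that $(g_1,g_2)$ is a system of generators.

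The main obstacle is step (a): proving the level descent of $\alpha_2,\alpha_3,\alpha_4$ while keeping the nebentypus-and-level bookkeeping consistent across all six levels $N>1$ at once. After that, (b) is a brief $q$-expansion argument and (c) is a finite — if case-heavy — dimension count, which must nonetheless come out exactly right for $N=2,3,4,6,8,9$ simultaneously.
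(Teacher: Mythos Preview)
The paper does not supply a proof of this Proposition: it is imported from the companion manuscript \cite{SS}, as announced in the sentence immediately preceding the statement (``The graded ring $\cM(N)=\bigoplus_k\cM_k(N)$ is studied in \cite{SS}. We use the results of the paper.''). There is therefore no argument in the present paper against which to compare yours.

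That said, your outline is a correct and standard route to such structure theorems. The three ingredients --- (a) modularity of $g_1,g_2$ at the claimed level and character, including the level descent for $\alpha_2,\alpha_3,\alpha_4$; (b) algebraic independence via the non-constancy of the weight-zero function $g_1^{\,n_2/d}/g_2^{\,n_1/d}$, which has a pole at $\infty$ since each $\alpha_N$ has vanishing constant term; and (c) the dimension match in every even weight using the genus-zero Riemann--Roch/dimension formulas for $\Gamma_0(N)$ --- are exactly what is needed, and you have correctly isolated the level descent in (a) as the only step with real content. A Sturm-bound comparison is one clean way to carry it out; for these particular small levels one can alternatively identify the $\alpha_N$ with explicit eta- or theta-products from which the level and character are visible directly. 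Your nebentypus bookkeeping explaining when the truncation $(\cdot)_{2\Z}$ is needed is also correct. In short, your proposal would constitute a complete proof, whereas the paper simply quotes the result.
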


We put
\begin{align*}\Delta_1&=\z\frac1{12^3}(\iE_4^3-\iE_6^2),\\
\Delta_2&=\alpha_2(\iC_2^2-64\alpha_2),\\
\Delta_3&=\alpha_3(\iF_3^3-27\alpha_3),\\
\Delta_4&=\alpha_2(\iC_4-16\alpha_4),\\
\Delta_6&=\alpha_6(\iF_3-3\alpha_6)(\iF_3-4\alpha_6)(\iF_3-12\alpha_6),\\
\Delta_8&=\alpha_4\iF_4(\iF_4-8\alpha_8),\\
\Delta_9&=\alpha_3(\iF_3-9\alpha_9).\end{align*}

\begin{Thm}We get
\begin{align*}\cP_{12}(1)&=\{\Delta_1\},\\
\cP_8(2;2)&=\{\Delta_2\},\\
\cP_6(3;1)&=\{\Delta_3\},\\
\cP_6(4)&=\{\Delta_4\},\\
\cP_4(6;6)&=\{\Delta_6\},\\
\cP_4(8)&=\{\Delta_8\},\\
\cP_4^0(9)&=\{\Delta_9\}.\end{align*}
\end{Thm}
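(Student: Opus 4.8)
The plan is to verify each identity by exhibiting the right-hand side as a cusp form of the correct weight and level, checking it is normalized, and then invoking the dimension counts from Section 2 to conclude that it is the unique primitive form in the relevant space. First I would use the Proposition to write $\cM(N)$ in terms of the explicit generators, so that each $\Delta_N$ is literally an element of $\C[[q]]$ with computable Fourier coefficients. For instance, for $N=1$ one checks $\Delta_1=\frac1{12^3}(\iE_4^3-\iE_6^2)$ lies in $\cM_{12}(1)$, has vanishing constant term (since $\iE_4,\iE_6$ have constant term $1$), hence lies in $\cS_{12}(1)$; the table gives $\#\cP_{12}(1)=1$ (the $n=0$ row of $12+12n$), and $\dim\cS_{12}(1)=1$, so $\cS_{12}(1)=\C\Delta_1$; finally $\ia_1(\Delta_1)=1$ because the $q^1$-coefficients of $\iE_4^3$ and $\iE_6^2$ differ by exactly $12^3$ (namely $3\cdot240-2\cdot(-504)=720+1008=1728$). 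A normalized generator of a one-dimensional space $\cS_k^0(N)$ spanned by a primitive form is that primitive form, which gives $\cP_{12}(1)=\{\Delta_1\}$.

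Next I would carry out the analogous three-part check for each of the remaining six cases. The weight and level: each $\Delta_N$ is a product of the degree-one or degree-two pieces $\iF_3,\iF_4,\iC_N,\alpha_\bullet$ whose weights are recorded after the Proposition (e.g. $\alpha_2\in\cM_4(2)$, $\alpha_3\in\cM_6(3)$, $\alpha_4\in\cM_6(4)$, $\alpha_6,\alpha_8,\alpha_9\in\cM_4$ of their levels, $\iF_3\in\cM_1(3)$, $\iF_4\in\cM_1(4)$, $\iC_2\in\cM_2(2)$), so each product lands in $\cM_k(N)$ for the advertised $(k,N)$. Cuspidality: one shows the constant term vanishes — for the factored expressions this is immediate since each factor has constant term $1$ except the overall factor $\alpha_\bullet$, which is a difference $\frac1c(\iF^{(1)}-\iF^{(h)})$ and hence has constant term $0$; so each $\Delta_N$ is divisible by $\alpha_\bullet$ in $\cM(N)$ and therefore lies in the ideal $\cS(N)$. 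Normalization: compute $\ia_1(\Delta_N)$ from the leading $q$-expansions, e.g. $\ia_1(\alpha_6)=\frac16\cdot 6=1$, $\ia_1(\alpha_8)=\frac14\cdot4=1$, $\ia_1(\alpha_9)=1$, and multiply; the low-order terms of the bracketed factors contribute only through their constant term $1$ to $\ia_1$, so $\ia_1(\Delta_N)=1$ in every case. Matching the primitive-form count: the tables give $\#\cP_8(2;2)=\#\cP_6(3;1)=\#\cP_6(4)=\#\cP_4(6;6)=\#\cP_4(8)=\#\cP_4^0(9)=1$ at $n=0$ (using $\iota=0$ when $n=0$), and these equal $\dim$ of the corresponding eigenspaces, so the space is one-dimensional and $\Delta_N$ is its normalized generator.

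The one genuinely nontrivial point is that being a normalized cusp form of the right weight and level does not by itself force membership in the specific piece $\cP_k(N;i)$ (resp. $\cP_k^0(9)$): one must also pin down the Atkin–Lehner eigenvalues, i.e. show $\ia_p(\Delta_N)=\mp p^\vk$ with the signs prescribed by the index $i$. I would extract these from the factored form of $\Delta_N$ together with the action of the operators $f\mapsto f^{(h)}$ on $\iF_3,\iF_4$: since $\alpha_\bullet$ is built from $\iF-\iF^{(h)}$, it is an eigenvector for the relevant Atkin–Lehner involution, and the sign is read off by comparing $\ia_p$ against $p^\vk$ at the small primes $p\in\{2,3\}$ dividing $N$ — concretely, $\ia_2(\Delta_2)$, $\ia_3(\Delta_3)$, $\ia_3(\Delta_9)$, etc., computed to finitely many terms, decide everything. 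For the CM case $\cP_4^0(9)$ one additionally checks that $\Delta_9=\alpha_3(\iF_3-9\alpha_9)$ has $\ia_p=0$ for $p\equiv2\pmod3$, which follows from $\alpha_3,\alpha_9$ being supported on $\rho_3$-twist-invariant coefficients; this is exactly the CM-by-$\rho_3$ condition isolating $\cP_k^0(9)$ from $\cP_k^1(9)$. The main obstacle is thus organizing these sign/CM computations uniformly rather than any deep new idea — once the generators and their Hecke/Atkin–Lehner behaviour are in hand, each line of the theorem is a short finite check backed by the dimension tables.
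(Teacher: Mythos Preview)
Your overall plan is sound and close to the paper's, but there is one genuine gap. For $N>1$ the implication ``$\ia_0(\Delta_N)=0\Rightarrow\Delta_N\in\cS_k(N)$'' is not valid as stated: vanishing of the constant term only gives vanishing at the cusp $\infty$, whereas $X_0(N)$ has several cusps for each $N\in\{2,3,4,6,8,9\}$, and a cusp form must vanish at all of them. So the step ``$\Delta_N$ is divisible by $\alpha_\bullet$, hence lies in the ideal $\cS(N)$'' needs further justification (for instance via an $\eta$-product identity for $\Delta_N$, or by checking the expansion at every cusp). Incidentally, several of your weight assignments are off: from the gradings in the Proposition one reads $\alpha_3$ of weight $3$, $\alpha_4$ of weight $2$, and $\alpha_6,\alpha_8,\alpha_9$ of weight $1$.

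The paper sidesteps this entirely by never asserting $\Delta_N\in\cS_k(N)$ in advance. It works inside $\cM_k(N)$: since the trace-formula results of \cite{S} compute $\ia_l\!\big(\sum\cP_k(N;i)\big)$ for every $l\ge1$, one checks enough Fourier coefficients to see that
\[
\textstyle\sum\cP_k(N;i)\;-\;\Delta_N\ \in\ \cM_k(N)\cap\C[[q]]q^r\ =\ \{0\}
\]
for $r$ sufficiently large. Combined with $\#\cP_k(N;i)=1$ from the tables, this yields $\cP_k(N;i)=\{\Delta_N\}$; that $\Delta_N$ is a cusp form (indeed a newform) is then a \emph{consequence}, not an input.

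Finally, your third step---verifying the Atkin--Lehner signs at $p\mid N$ and the CM-by-$\rho_3$ condition for $N=9$---is unnecessary here. In each of the seven cases the lower-level cusp spaces all vanish, so $\cS_k(N)=\cS_k^0(N)$, and the tables already tell you that exactly one piece $\cP_k(N;i)$ (resp.\ $\cP_k^0(9)$) is nonempty, with a single element. Once the unique primitive form is identified with $\Delta_N$, its membership in the indicated piece is automatic.
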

\begin{proof}For example, the first assertion follows from
\[\z\sum\cP_{12}(1)-\Delta_1\in\cM_{12}(1)\cap\C[[q]]q^2=\{0\}.\]
\end{proof}

\begin{Thm}We get
\begin{align*}\cS(1)&=(\Delta_1),\\
\cS(2)&=(\Delta_2),\\
\cS(3)&=(\Delta_3),\\
\cS(4)&=(\Delta_4),\\
\cS(6)&=(\Delta_6),\\
\cS(8)&=(\Delta_8),\\
\cS(9)&=(\Delta_9).\end{align*}
\end{Thm}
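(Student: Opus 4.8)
The plan is to establish each identity $\cS(N)=(\Delta_N)$ as an equality of ideals in the graded ring $\cM(N)$, using the explicit presentations of $\cM(N)$ from the Proposition together with the factored forms of $\Delta_N$. For the inclusion $(\Delta_N)\subseteq\cS(N)$ it suffices to check that $\Delta_N$ itself is a cuspform, i.e. $\Delta_N\in\cS(N)$; since $\cS(N)$ is an ideal of $\cM(N)$, the principal ideal it generates is then contained in $\cS(N)$. Concretely, from the previous Theorem we already know $\Delta_N\in\cP_\bullet(N)\subseteq\cS(N)$ in each of the seven cases (for $N=1,2,3,4$ at the stated weight, and for $N=6,8,9$ likewise), so this direction is immediate.

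The substantive direction is $\cS(N)\subseteq(\Delta_N)$. First I would reduce to a statement about $q$-expansions: under the isomorphisms of the Proposition, $\cM(N)$ is (for $N=1,2$) a polynomial ring in two generators of low weight, and (for $N=3,4,6,8,9$) the even part of such a ring; in every case $\cM(N)$ has Krull dimension $2$ and is a UFD, and $\Delta_N$ is, up to a unit, a product of the ``obvious'' cuspidal linear/low-degree factors visible in its displayed factorization (e.g. $\Delta_1=\frac1{12^3}(\iE_4^3-\iE_6^2)$, $\Delta_6=\alpha_6(\iF_3-3\alpha_6)(\iF_3-4\alpha_6)(\iF_3-12\alpha_6)$). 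I would argue that each such factor vanishes at $q=0$ (one checks the constant term of each factor is $0$, using $\iF_3=1+O(q)$, $\iF_4=1+O(q)$, $\alpha_N=O(q)$, $\iC_N=O(q)$), hence $\Delta_N$ vanishes to the exact order at the cusp $q=0$ equal to the number of factors. Then, given $g\in\cS_k(N)$, I want to show $g/\Delta_N\in\cM_{k-w}(N)$ where $w$ is the weight of $\Delta_N$; equivalently $g$ lies in the ideal $(\Delta_N)$.

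The cleanest route is a dimension count combined with the fact that $\Delta_N$ is a non-zero-divisor (the ring is a domain): multiplication by $\Delta_N$ gives an injection $\cM_{k-w}(N)\hookrightarrow\cS_k(N)$, so it is enough to show $\dim\cM_{k-w}(N)=\dim\cS_k(N)$ for all relevant $k$. Both sides are computed from the Proposition's presentation (the left side by counting monomials of the appropriate graded degree, the right side from the standard dimension formulas for $\cS_k(\Gamma_0(N))$, or equivalently from $\dim\cM_k(N)$ minus the number of cusps, bearing in mind the parity constraints encoded by the ${}_{2\Z}$ operation). Matching these two Hilbert functions in each of the seven cases — i.e. checking that the displayed $q$-order of $\Delta_N$ at the cusp equals $\dim\cM_k(N)-\dim\cS_k(N)$ and that there is no obstruction at the other cusps — is the computational heart of the argument. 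Alternatively, and perhaps more in the spirit of the paper, one can bypass dimension counts at the other cusps by working entirely inside $\C[[q]]$: $\cM(N)\hookrightarrow\C[[q]]$, $\Delta_N$ is a product of $r$ factors each in $\C[[q]]q$, so $\Delta_N\in\C[[q]]q^r$; and one shows $\cS_k(N)\subseteq\C[[q]]q^r$ for the relevant $k$ using the table of $\#\cP_k(N;i)$ in Section~2, then divides.

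The main obstacle I anticipate is precisely the verification, at the \emph{other} cusps (the ones other than $q=\infty$, i.e. $q=0$), that division by $\Delta_N$ stays holomorphic — equivalently, that the order of vanishing of $\Delta_N$ at every cusp is the minimal one allowed, so that $\Delta_N$ generates the full ideal of cuspforms rather than a proper subideal. In the presentation-based approach this is subsumed into proving that $\Delta_N$ equals (up to a unit of $\cM(N)$, i.e. up to a nonzero scalar) the product of all the height-one primes of $\cM(N)$ corresponding to cusps; for $N=6$ with four cusps this is the most delicate case, since one must confirm that the four linear factors $\alpha_6,\iF_3-3\alpha_6,\iF_3-4\alpha_6,\iF_3-12\alpha_6$ are pairwise non-associate and each cuts out a distinct cusp. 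I would handle this by evaluating a few Fourier coefficients of each factor and invoking the explicit ring structure from \cite{SS}, rather than by geometry.
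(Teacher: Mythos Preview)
Your ``cleanest route'' --- multiplication by $\Delta_N$ is injective because $\cM(N)$ is a domain, and then $\dim\cM_{k-w}(N)=\dim\cS_k(N)$ forces $\Delta_N\cM_{k-w}(N)=\cS_k(N)$ --- is exactly the paper's argument. The paper writes only the case $N=1$ in one line (``$\cS_{12+k}(1)=\Delta_1\cM_k(1)$ since $\dim\cS_{12+k}(1)=\dim\cM_k(1)$'') and leaves the remaining six cases implicit; the dimension equalities follow from Proposition~1 together with the standard formulas for $\dim\cS_k(\Gamma_0(N))$.

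The ``main obstacle'' you anticipate is not an obstacle for this route. Once you know $\Delta_N\in\cS(N)$ (Theorem~2) and the dimensions agree, surjectivity of $\cM_{k-w}(N)\hookrightarrow\cS_k(N)$ is pure linear algebra: there is nothing left to verify at the other cusps, because vanishing at \emph{all} cusps is already encoded in the target space $\cS_k(N)$. In particular you never need to identify which height-one prime of $\cM(N)$ corresponds to which cusp, nor to check that the four linear factors of $\Delta_6$ are pairwise non-associate. Your UFD/prime-ideal framework and the direct $q$-division alternative are valid but strictly more work than the paper's one-line Hilbert-function comparison.
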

\begin{proof}
We see $\cS_{12+k}(1)=\Delta_1\cM_k(1)$ since $\dim\cS_{12+k}(1)=\dim\cM_k(1)$, and we get the first assertion, for example.
\end{proof}

\section{Further primitive forms}

At first, for $f\in\cP_k(N)$ and a prime number $p\nmid N$, note
\[\ia_p(f)\ia_{p^{n-1}}(f)=\ia_{p^n}(f)+p^{k-1}\ia_{p^{n-2}}(f)\]
(cf. \cite[Lemma 4.5.7(2) and (4.5.27)]{Mi}), thus
\[\ia_p(f)^2=\ia_{p^2}(f)+p^{k-1},\]
\[\ia_p(f)^3=\ia_{p^3}(f)+2p^{k-1}\ia_p(f),\]
\[\ia_p(f)^4=\ia_{p^4}(f)+3p^{k-1}\ia_{p^2}(f)+2p^{2(k-1)},\]
\[\ia_p(f)^5=\ia_{p^5}(f)+4p^{k-1}\ia_{p^3}(f)+5p^{2(k-1)}\ia_p(f).\]

\begin{Lem}If $f,g\in\cP_k(N)$ and $\sigma=\frac{f+g}2$, $\phi=\frac{f-g}2$, then for a prime number $p\nmid N$ we see
\[\ia_p(\phi)^2=\z-\ia_p(\sigma)^2+\ia_{p^2}(\sigma)+p^{k-1},\]
\[\ia_{p^2}(\phi)=\z2\ia_p(\sigma)\ia_p(\phi),\]
\[\ia_{p^3}(\phi)=\z\big(2\ia_p(\sigma)^2+\ia_{p^2}(\sigma)-p^{k-1}\big)\ia_p(\phi).\]
Moreover, if $(l,m)=1$, then we see
\[\ia_l(\phi)\ia_m(\phi)=\ia_{lm}(\sigma)-\ia_l(\sigma)\ia_m(\sigma),\]
\[\ia_{lm}(\phi)=\ia_l(\sigma)\ia_m(\phi)+\ia_l(\phi)\ia_m(\sigma).\]
\end{Lem}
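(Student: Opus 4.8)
The plan is to reduce every identity to the Hecke relations already available for the primitive forms $f$ and $g$ themselves. Since $f,g\in\cP_k(N)$ are primitive forms, their Fourier coefficients are multiplicative at coprime indices, so $\ia_{lm}(f)=\ia_l(f)\ia_m(f)$ and $\ia_{lm}(g)=\ia_l(g)\ia_m(g)$ whenever $(l,m)=1$; moreover, at each prime $p\nmid N$ the recursion displayed just before the lemma gives $\ia_{p^2}(f)=\ia_p(f)^2-p^{k-1}$ and $\ia_{p^3}(f)=\ia_p(f)^3-2p^{k-1}\ia_p(f)$, and likewise for $g$. In every case I would substitute $\ia_n(f)=\ia_n(\sigma)+\ia_n(\phi)$ and $\ia_n(g)=\ia_n(\sigma)-\ia_n(\phi)$, so that adding an identity for $f$ to the corresponding one for $g$ isolates a relation among $\sigma$-coefficients, while subtracting isolates one among $\phi$-coefficients.

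First I would dispatch the two coprime identities. Expanding $\ia_{lm}(f)=\ia_l(f)\ia_m(f)$ and $\ia_{lm}(g)=\ia_l(g)\ia_m(g)$ in terms of the $\sigma$- and $\phi$-coefficients and adding the results gives $\ia_{lm}(\sigma)=\ia_l(\sigma)\ia_m(\sigma)+\ia_l(\phi)\ia_m(\phi)$, which is the fourth displayed identity; subtracting them gives $\ia_{lm}(\phi)=\ia_l(\sigma)\ia_m(\phi)+\ia_l(\phi)\ia_m(\sigma)$, the fifth.

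For the prime-power identities I would argue the same way. Averaging $\ia_{p^2}(f)=\ia_p(f)^2-p^{k-1}$ with $\ia_{p^2}(g)=\ia_p(g)^2-p^{k-1}$ and using $\ia_p(f)^2+\ia_p(g)^2=2\ia_p(\sigma)^2+2\ia_p(\phi)^2$ yields $\ia_{p^2}(\sigma)=\ia_p(\sigma)^2+\ia_p(\phi)^2-p^{k-1}$, which rearranges into the first identity; subtracting the same two relations and factoring $\ia_p(f)^2-\ia_p(g)^2=\bigl(\ia_p(f)+\ia_p(g)\bigr)\bigl(\ia_p(f)-\ia_p(g)\bigr)=4\ia_p(\sigma)\ia_p(\phi)$ yields the second. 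For the third, I would subtract the two $p^3$-relations, factor $\ia_p(f)^3-\ia_p(g)^3=2\ia_p(\phi)\bigl(3\ia_p(\sigma)^2+\ia_p(\phi)^2\bigr)$, and then invoke the first identity to replace $\ia_p(\phi)^2$ by $\ia_{p^2}(\sigma)-\ia_p(\sigma)^2+p^{k-1}$; after combining with the leftover $-2p^{k-1}\ia_p(\phi)$ the expression collapses to $\bigl(2\ia_p(\sigma)^2+\ia_{p^2}(\sigma)-p^{k-1}\bigr)\ia_p(\phi)$.

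I do not expect a genuine obstacle: the whole argument is formal manipulation of the Hecke relations recalled in the excerpt, and the only hypotheses used — namely $p\nmid N$ and that $f,g$ are normalized newforms, hence both multiplicative at coprime indices and subject to the good-prime recursion — are precisely what $f,g\in\cP_k(N)$ provides. The one point requiring mild care is ordering: the first identity must be established before it is used to simplify the third.
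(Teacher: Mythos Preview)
Your proposal is correct and follows essentially the same approach as the paper: write $f=\sigma+\phi$, $g=\sigma-\phi$, apply the Hecke relations $\ia_p(\cdot)^2=\ia_{p^2}(\cdot)+p^{k-1}$ and $\ia_l(\cdot)\ia_m(\cdot)=\ia_{lm}(\cdot)$ to each, then add and subtract. You are in fact more complete than the paper's own proof, which explicitly handles only the first two and the last two displayed identities and leaves the $\ia_{p^3}(\phi)$ relation to the reader; your derivation of that third identity (including the use of the already-established first identity to eliminate $\ia_p(\phi)^2$) fills that gap cleanly.
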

\begin{proof}
The first two assertions follow from
\begin{align*}\ia_p(\sigma+\phi)^2&=\ia_{p^2}(\sigma+\phi)^2+p^{k-1},\\
\ia_p(\sigma-\phi)^2&=\ia_{p^2}(\sigma-\phi)^2+p^{k-1}.\end{align*}
The last two assertions follow from
\begin{align*}\ia_l(\sigma+\phi)\ia_m(\sigma+\phi)&=\ia_{lm}(\sigma+\phi),\\
\ia_l(\sigma-\phi)\ia_m(\sigma-\phi)&=\ia_{lm}(\sigma-\phi).\end{align*}
\end{proof}
\begin{Prop}With $P_k=\cP_k(1)/\Delta_1$, $d=12\Delta_1$, we get
\begin{align*}P_{12}&=\{1\},\\
P_{16}&=\{\iE_4\},\\
P_{18}&=\{\iE_6\},\\
P_{20}&=\{\iE_8\},\\
P_{22}&=\{\iE_{10}\},\\
P_{26}&=\{\iE_{14}\},\end{align*}
\begin{align*}P_{24}&=\big\{\iE_4^3-(13\pm\sq{144169})d\big\},\\
P_{28}&=\big\{\iE_4(\iE_4^3-9(47\pm\sq{18209})d)\big\},\\
P_{30}&=\big\{\iE_6(\iE_4^3+8(43\pm\sq{51349})d)\big\},\\
P_{32}&=\big\{\iE_8(\iE_4^3+(1567\pm\sq{18295489})d)\big\},\\
P_{34}&=\big\{\iE_{10}(\iE_4^3-6(851\pm\sq{2356201})d)\big\},\\
P_{38}&=\big\{\iE_{14}(\iE_4^3-4(2039\pm\sq{63737521})d)\big\}.\end{align*}
\end{Prop}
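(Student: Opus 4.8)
The plan is to combine three ingredients already at hand: the identity $\cS_k(1)=\Delta_1\cM_{k-12}(1)$ coming from the previous theorem, the explicit monomial basis $\{\iE_4^a\iE_6^b\}$ of $\cM(1)$ from the Proposition (rewritten through $\iE_6^2=\iE_4^3-144\,d$), and the computability of $\ia_l(\sum\cP_k(1))$ from \cite{S}.

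The six weights with $\dim\cS_k(1)=1$, namely $k\in\{12,16,18,20,22,26\}$, are immediate. Here $\cP_k(1)=\{f\}$ for a single normalized newform $f=q+O(q^2)$, so $g:=f/\Delta_1$ lies in $\cM_{k-12}(1)$, a one-dimensional space spanned, by the Proposition, by one of $1,\iE_4,\iE_6,\iE_8,\iE_{10},\iE_{14}$, each of constant term $1$. Since $\Delta_1=q+O(q^2)$ we get $g=1+O(q)$, so $g$ minus that generator lies in $\cM_{k-12}(1)$ with vanishing constant term and hence is $0$; this gives the first block of formulas.

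For $\dim\cS_k(1)=2$, namely $k\in\{24,28,30,32,34,38\}$, write $\cP_k(1)=\{f_+,f_-\}$ with $f_\pm=q+O(q^2)$ and put $\sigma=\frac12(f_++f_-)$, $\phi=\frac12(f_+-f_-)$, so that $\sigma=q+O(q^2)$, $\phi=O(q^2)$, and $\sigma,\phi\in\cS_k(1)=\Delta_1\cM_{k-12}(1)$. By the Proposition (with $\iE_0:=1$), $\cM_{k-12}(1)$ has the basis $\{\iE_{k-24}\iE_4^3,\ \iE_{k-24}\,d\}$, since its two monomials of weight $k-12$ in $\iE_4,\iE_6$ are $\iE_{k-24}\iE_4^3$ and $\iE_{k-24}\iE_6^2$ and $\iE_6^2=\iE_4^3-144\,d$. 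Comparing constant terms then shows $\sigma/\Delta_1=\iE_{k-24}(\iE_4^3-c_\sigma\,d)$ for a scalar $c_\sigma$, while $\phi/\Delta_1$ has vanishing constant term and so equals $\mu\,\iE_{k-24}\,d$ for a scalar $\mu$; consequently
\[\cP_k(1)/\Delta_1=\bigl\{\iE_{k-24}\bigl(\iE_4^3-(c_\sigma\mp\mu)\,d\bigr)\bigr\}.\]

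It remains to identify $c_\sigma$ and $\mu^2$, and this is where \cite{S} enters: $\ia_l(\sigma)=\frac12\ia_l(\sum\cP_k(1))$ is computable for each $l$. The $q^2$-coefficient of $\sigma=\Delta_1\iE_{k-24}(\iE_4^3-c_\sigma\,d)$ is affine in $c_\sigma$ with slope $-12$, because $\Delta_1\iE_{k-24}\,d=12q^2+O(q^3)$, so $\ia_2(\sigma)$ determines $c_\sigma$. For $\mu$, I would invoke the first formula of the Lemma with $p=2$, $f=f_+$, $g=f_-$: it reads $\ia_2(\phi)^2=-\ia_2(\sigma)^2+\ia_4(\sigma)+2^{k-1}$, and since $\ia_2(\phi)=12\mu$ this gives $\mu^2=\frac1{144}\bigl(-\ia_2(\sigma)^2+\ia_4(\sigma)+2^{k-1}\bigr)$, with $\ia_4(\sigma)=\frac12\ia_4(\sum\cP_k(1))$ once more from \cite{S}. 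Running $k=24,28,30,32,34,38$ then reduces everything to checking, weight by weight, that $c_\sigma$ and $\mu^2$ agree with the numbers read off the statement (for example $k=24$: $\ia_2(\sigma)=540$, $\mu^2=144169$; $k=28$: $c_\sigma=423$, $\mu^2=81\cdot18209$; $k=32$: $c_\sigma=-1567$, $\mu^2=18295489$). The only genuine obstacle is computational: one must compute $\ia_2$ and $\ia_4$ of $\sum\cP_k(1)$ in all six weights and carry through the large integers that occur, confirming in each case that $\mu^2$ equals precisely the stated non-square value, which is exactly what certifies the quadratic irrationalities $\sqrt{144169},\sqrt{18209},\dots$ in the formulas.
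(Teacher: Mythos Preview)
Your proposal is correct and follows essentially the same approach as the paper: split each two-dimensional $\cP_k(1)$ into $\sigma=\tfrac{f+g}{2}$ and $\phi=\tfrac{f-g}{2}$, pin down $\sigma$ inside the two-dimensional space $\Delta_1\cM_{k-12}(1)$ using the computable coefficients $\ia_l(\sum\cP_k(1))$, and determine $\phi$ up to sign via $\ia_2(\phi)^2$ from the Lemma. The paper's proof writes out only the $k=24$ case in exactly this form; your version is simply a more explicit articulation of the same argument, with the helpful observation that the basis $\{\iE_{k-24}\iE_4^3,\ \iE_{k-24}d\}$ makes the constant-term matching uniform across all six weights.
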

\begin{proof}We show the assertion for $P_{24}$: Note $\cS_{24}(1)=\Delta_1(\C\iE_4^3\oplus\C\Delta_1)$. We see
\begin{align*}\big\{\z\frac{f+g}2\,\big|\,\cP_{24}(1)=\{f,g\}\big\}&\subset\cS_{24}(1)\cap(q+540q^2+\C[[q]]q^3)\\
&=\{\Delta_1(\iE_4^3-13d)\},\end{align*}
\begin{align*}\big\{\z\frac{f-g}2\,\big|\,\cP_{24}(1)=\{f,g\}\big\}&\subset\{\phi\in\cS_{24}(1) \,|\, \ia_1(\phi)=0,\,\ia_2(\phi)^2=12^2\cdot144169\}\\
&=\{\pm\sq{144169}\Delta_1d\}.\end{align*}
\end{proof}
\smallskip

\begin{Prop}With $P_{k;i}=\cP_k(2;i)/\Delta_2$, $d=8\Delta_2$, $G=\iC_2^2-128\alpha_2$ and
\[H=\iC_2^4+18d=\iE_4\iE_4^{(2)},\]
\[I=\iC_2^4-81d=\iE_6\iE_6^{(2)}/\iC_2^2,\]
we get
\[P_{8;2}=\{1\},\]
\[P_{10;1}=\{\iC_2\},\]
\begin{align*}P_{14;1}&=\{\iC_2^3\},\\
\6P_{14;2}&=\{\iC_2G\},\end{align*}
\[P_{16;2}=\{\iC_2^4-27d\},\]
\[P_{18;1}=\{\iC_2H\},\]
\begin{align*}P_{20;1}&=\{G(\iC_2^4+63d)\},\\
P_{20;2}&=\{\iC_2^2I\},\end{align*}
\begin{align*}P_{22;1}&=\{\iC_2^3(\iC_2^4+108d)\},\\
P_{22;2}&=\{\iC_2G(\iC_2^4-132d)\},\end{align*}
\[P_{24;2}=\{H(\iC_2^4-297d)\},\]
\begin{align*}
P_{26;1}&=\big\{\iC_2(H(\iC_2^4+468d)-75(161\pm\sq{106705})d^2)\big\},\\
P_{26;2}&=\{\iC_2^3G(\iC_2^4-522d)\},\end{align*}
\begin{align*}P_{28;1}&=\{G(\iC_2^8+1011\iC_2^4d-126^2d^2)\},\\
P_{28;2}&=\{\iC_2^2I(\iC_2^4-972d)\},\end{align*}
\begin{align*}P_{30;1}&=\{\iC_2^3H(\iC_2^4+1998d)\},\\
P_{30;2}&=\{\iC_2GH(\iC_2^4-2082d)\},\end{align*}
\begin{align*}P_{32;1}&=\{\iC_2^2GI(\iC_2^4+4158d)\},\\
P_{32;2}&=\big\{\iC_2^4I(\iC_2^4-4050d)-3(35711\pm5\sq{987507049})(\iC_2^4-9d)d^2\big\},\end{align*}
\begin{align*}
P_{34;1}&=\big\{\iC_2(H^2(\iC_2^4+8118d)-165(2617\pm\sq{79829689})(\iC_2^4-12d)d^2)\big\},\\
P_{34;2}&=\{\iC_2^3G(\iC_2^8-8214\iC_2^4d-2004426d^2)\},\end{align*}
\begin{align*}P_{36;1}&=\{GH(\iC_2^8+16341\iC_2^4d+2008314d^2)\},\\
P_{36;2}&=\{\iC_2^2HI(\iC_2^4-16362d)\},\end{align*}
\begin{align*}P_{38;1}&=\big\{\iC_2^3(I^2(\iC_2^4+32886d)+3(128731\pm10\sq{223572801841})(\iC_2^4-18d)d^2)\big\},\\
P_{38;2}&=\big\{\iC_2G(H(\iC_2^8-32814\iC_2^4d+6800274d^2)-270(8936\pm\sq{3026574721})(\iC_2^4-2d)d^2)\big\},\end{align*}
\begin{align*}P_{40;1}&=\{\iC_2^2GI(\iC_2^8+65586\iC_2^4d-7358526d^2)\},\\
P_{40;2}&=\big\{\iC_2^4I(\iC_2^8-65502\iC_2^4d-7667622d^2)\\[-2pt]
&\6+3(2278733\pm5\sq{4202094647521})(\iC_2^8-21\iC_2^4d+324d^2)d^2\big\},\end{align*}
\begin{align*}P_{42;1}&=\big\{\iC_2H(\iC_2^{12}+131004\iC_2^8d + 62037837\iC_2^4d^2 - 597293298d^3\\[-2pt]
&\6\pm45\sq{4559670239569}(\iC_2^4-42d)d^2)\big\},\\
P_{42;2}&=\{\iC_2^3GH(\iC_2^8-131124\iC_2^4d+83917944d^2)\},\end{align*}
\begin{align*}P_{44;1}&=\big\{G(\iC_2^4I(\iC_2^8+262188\iC_2^4d-125370504d^2)\\[-2pt]
&\6-54(633841\pm5\sq{1589985537001})(\iC_2^8-11\iC_2^4d-294d^2)d^2)\big\},\\
P_{44;2}&=\big\{\iC_2^2I(H(\iC_2^8-262134\iC_2^4d-221008986d^2)\\[-2pt]
&\6+330(344132\pm\sq{97578078049})(\iC_2^4+54d)d^2)\big\},\end{align*}
\begin{align*}P_{46;1}&=\big\{\iC_2^3(H^2(\iC_2^8+524196\iC_2^4d+502603704d^2)\\[-2pt]
&\6-30(2545093\pm\sq{800679089088649})(\iC_2^8-30\iC_2^4d+2106d^2)d^2)\big\},\\
P_{46;2}&=\big\{\iC_2G(H^2(\iC_2^8-524364\iC_2^4d+698541624d^2)\\[-2pt]
&\6-44550(27173\pm\sq{655098313})(\iC_2^8-14\iC_2^4d+674d^2)d^2)\big\},\end{align*}
\begin{align*}P_{48;1}&=\{\iC_2^2GHI(\iC_2^8+1048596\iC_2^4d-3037809096d^2)\},\\[-2pt]
P_{48;2}&=\big\{H(\iC_2^4I(\iC_2^8-1048572\iC_2d-4492325448d^2)\\[-2pt]
&\6+27(40221421\pm5\sq{23589383914321})(5\iC_2^8-255\iC_2^4d-1782d^2)d^2)\big\},\end{align*}
\begin{align*}P_{50;2}&=\big\{\iC_2^3G(\iC_2^{16}-2097198\iC_2^{12}d+2264918139\iC_2^8d^2+505350323388\iC_2^4d^3\\[-2pt]
&\6+6398757749736d^4\pm675\sq{88518163840129})(\iC_2^8-20\iC_2^4d+5496d^2)d^2)\big\},\end{align*}
\begin{align*}P_{52;1}&=\big\{G(\iC_2^{20}+4194255\iC_2^{16}d+6799160250\iC_2^{12}d^2+2040440049090\iC_2^8d^3\\[-2pt]
&\6-83194886921400\iC_2^4d^4+293562746136828d^5\\[-2pt]
&\6\pm2430\sq{1252614454081}(\iC_2^{12}-23\iC_2^8d-75580\iC_2^4d^2-161406d^3)d^2)\big\},\\
P_{52;2}&=\big\{\iC_2^2I(H^2(\iC_2^8-4194324\iC_2^4d-364357891656d^2)\\[-2pt]
&\6+2550(8439457\pm\sq{506427715249})(17\iC_2^8+714\iC_2^4d+4374d^2)d^2)\big\},\end{align*}
\begin{align*}P_{54;1}&=\big\{\iC_2^3H(I^2(\iC_2^8+8388684\iC_2^4d-86315938632d^2)\\[-2pt]
&\6+99(870246887\pm10\sq{6512129075798161})(\iC_2^8-60\iC_2^4d+4536d^2)d^2)\big\},\\
P_{54;2}&=\big\{\iC_2GH(\iC_2^{16}-8388678\iC_2^{12}d-10801650186\iC_2^8d^2+9280605732048\iC_2^4d^3\\[-2pt]
&\6+87094806847056d^4\pm2430\sq{61958883786409}(3\iC_2^8-132\iC_2^4d-21848d^2)d^2)\big\},\end{align*}
\begin{align*}P_{56;1}&=\big\{\iC_2^2GI(H(\iC_2^{12}+16777224\iC_2^8d-176195287908\iC_2^4d^2+29236247952912d^3)\\[-2pt]
&\6+1485(177049117\pm\sq{26906326547864689})(\iC_2^8+52\iC_2^4d-3528d^2)d^2)\big\},\end{align*}
\begin{align*}P_{58;2}&=\big\{\iC_2^3G(\iC_2^{20}-33554490\iC_2^{16}d-375498553815\iC_2^{12}d^2+152782877730240\iC_2^8d^3\\[-2pt]
&\6-3297806265873780\iC_2^4d^4+206104239255806928d^5\\[-2pt]
&\6\pm405\sq{3104405074519849}(23\iC_2^{12}-736\iC_2^8d+1009348\iC_2^4d^2-18089136d^3)d^2)\big\},\end{align*}
\begin{align*}P_{60;1}&=\big\{GH(\iC_2^4I(\iC_2^{12}+67108866\iC_2^8d-948879464328\iC_2^4d^2+536559192167544d^3)\\[-2pt]
&\6-162(452557207\pm5\sq{67855854004234321})\\[-2pt]
&\6(9\iC_2^{12}-477\iC_2^8d+452284\iC_2^4d^2-2677752d^3)d^2)\big\},\\
P_{60;2}&=\big\{\iC_2^2HI(\iC_2^{16}-67108878\iC_2^{12}d+953981895834\iC_2^8d^2+574215577640208\iC_2^4d^3\\[-2pt]
&\6-10988934566025744d^4\pm90\sq{163243373863610946481}(\iC_2^8+12\iC_2^4d+25272d^2)d^2)\big\},\end{align*}
\begin{align*}P_{64;1}&=\big\{\iC_2^2GI(H^2(\iC_2^{12}+268435434\iC_2^8d-6298913770848\iC_2^4d^2+8982842896748952d^3)\\[-2pt]
&\6+405(88418993\pm\sq{303998927727988249})\\[-2pt]
&\6(79\iC_2^{12}+3160\iC_2^8d-4632028\iC_2^4d^2+29910384d^3)d^2)\big\},\end{align*}
\begin{align*}P_{66;2}&=\big\{\iC_2^3GH(\iC_2^{20}-536871000\iC_2^{16}d+9017296260795\iC_2^{12}d^2+35470940529320190\iC_2^8d^3\\[-2pt]
&\6-10876886350087762080\iC_2^4d^4+291830643256197988008d^5\\[-2pt]
&\6\pm4455\sq{7845027215820649}(111\iC_2^{12}-6882\iC_2^8d-38096824\iC_2^4d^2+443457288d^3)d^2)\big\},\end{align*}
\begin{align*}P_{72;1}&=\big\{\iC_2^2GHI(\iC_2^{20}+4294967280\iC_2^{16}d-572595326890155\iC_2^{12}d^2\\[-2pt]
&\6+2300129009375739930\iC_2^8d^3+541770126895663860600\iC_2^4d^4\\[-2pt]
&\6-13229635827326598501192d^5\pm18225\sq{253576158163875107521}\\[-2pt]
&\6(3\iC_2^{12}+30\iC_2^8d-5450816\iC_2^4d^2+5366088d^3)d^2)\big\},\end{align*}
\end{Prop}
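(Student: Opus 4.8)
The backbone is the identity $\cS_k(2)=\Delta_2\,\cM_{k-8}(2)$, which comes from the identity $\cS(2)=(\Delta_2)$ together with $\cM(2)\simeq\C[\iC_2,\alpha_2]^{[2,4]}$. Since $\iC_2=1+O(q)$ and $\alpha_2=q+O(q^2)$, the monomials $\iC_2^{(k-8-4b)/2}\alpha_2^{b}$ with $0\le b\le D-1$, where $D=\dim\cS_k(2)=\lfloor(k-8)/4\rfloor+1$, form a basis of $\cM_{k-8}(2)$ with pairwise distinct leading exponents $b$; multiplying by $\Delta_2=q+O(q^2)$ one obtains a basis of $\cS_k(2)$ whose members have leading exponents $1,\dots,D$. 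Hence every element of $\cS_k(2)$ is determined by its coefficients $\ia_1,\dots,\ia_D$, and conversely any prescribed such tuple arising from an actual cuspform can be realised by back-substitution. So the plan is, for each weight $k$ and divisor $i\mid2$ occurring in the statement — these are precisely the pairs with $\#\cP_k(2;i)\in\{1,2\}$, read off from the tables above — to expand the displayed right-hand side as a polynomial in $\iC_2,\alpha_2$ and compare its first $D$ coefficients with those of the relevant primitive form(s). This requires, once and for all, recording the expansions of $d=8\Delta_2=8\alpha_2(\iC_2^2-64\alpha_2)$, $G=\iC_2^2-128\alpha_2$, $H=\iC_2^4+18d$, $I=\iC_2^4-81d$, and checking the stated identities $H=\iE_4\iE_4^{(2)}$ and $\iC_2^2I=\iE_6\iE_6^{(2)}$, all by comparison of finitely many coefficients using $\cM(2)\simeq\C[\iC_2,\alpha_2]^{[2,4]}$.

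First I treat the one-element cases. If $\cP_k(2;i)=\{f\}$, then $f=\sum\cP_k(2;i)$ and its coefficients $\ia_l(f)$ are furnished by \cite{S}. Expanding $\Delta_2$ times the displayed polynomial and comparing with $f$ through degree $q^D$ shows that the difference lies in $\cS_k(2)\cap\C[[q]]q^{D+1}=\{0\}$, exactly as in the proof that $\cP_{12}(1)=\{\Delta_1\}$ and in the level-$1$ computation.

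For the two-element cases, write $\cP_k(2;i)=\{f,g\}$ and put $\sigma=\tfrac{f+g}2$, $\phi=\tfrac{f-g}2\in\cS_k(2)$. The coefficients of $\sigma$ are $\tfrac12\ia_l(\sum\cP_k(2;i))$, again from \cite{S}, so $\sigma$ is identified with an explicit element of $\cS_k(2)$ by the back-substitution above. Since $f$ and $g$ share the value $\ia_2=\pm2^\vk$, we have $\ia_1(\phi)=\ia_2(\phi)=0$, and the preceding Lemma gives $\ia_3(\phi)^2=\ia_9(\sigma)+3^{k-1}-\ia_3(\sigma)^2=:R\in\Q$ together with, for every $l$, an expression $\ia_l(\phi)=\lambda_l\,\ia_3(\phi)$ with $\lambda_l\in\Q$ computable from the coefficients of $\sigma$ — indeed every prime-power coefficient of $\phi$ is a rational multiple of $\ia_3(\phi)$ by the recursions of the Lemma, and a product of an odd number of such factors collapses, via $\ia_3(\phi)^2=R$, back into $\Q\cdot\ia_3(\phi)$. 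Thus $\phi=\sqrt{R}\,\psi$ where $\psi=\phi/\ia_3(\phi)\in\cS_k(2)$ has rational coefficients $\lambda_1,\dots,\lambda_D$ (with $\lambda_1=\lambda_2=0$, $\lambda_3=1$), so $\psi$ too is identified explicitly. Then $\cP_k(2;i)=\{\sigma\pm\sqrt{R}\,\psi\}$, and rewriting $\sigma$ and $\sqrt{R}\,\psi$ through $\iC_2,H,I,G,d$ yields the displayed pair, the exhibited surd being $\sqrt{R}$ after a rational square has been absorbed into the coefficients.

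I expect no conceptual obstacle beyond those already met for the earlier theorems and the level-$1$ Proposition; the real cost is the volume of computation. The dimension $D$ reaches $17$ at $k=72$, so for the large weights one must assemble and solve $\Q$-linear systems of that size from the coefficient data of \cite{S}, dispose of one quadratic per conjugate pair, and re-expand everything in the $\iC_2,H,I,G,d$ basis — lengthy, but completely mechanical. The only genuinely delicate point is verifying, case by case, that $R\neq0$ (equivalently $\ia_3(f)\neq\ia_3(g)$): this is what makes the single prime $3$ enough to reconstruct $\phi$, sparing any passage to $p=5,7,\dots$, and it holds throughout the list, the tabulated radicands being positive non-squares.
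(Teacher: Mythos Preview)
Your proposal is correct and follows essentially the same approach as the paper: the paper illustrates only the case $P_{26;1}$, identifying $\sigma=\tfrac{f+g}{2}$ by matching enough $q$-coefficients inside $\cS_{26}(2)=\Delta_2\cM_{18}(2)$ and then pinning down $\phi=\tfrac{f-g}{2}$ via the constraints $\ia_1(\phi)=\ia_2(\phi)=\ia_4(\phi)=0$, $\ia_3(\phi)^2=4800^2\cdot106705$, $\ia_5(\phi)=-324\,\ia_3(\phi)$ coming from Lemma~4, exactly your $\sigma/\phi$ strategy. Your write-up simply makes the general mechanism (the triangular basis of $\Delta_2\cM_{k-8}(2)$, the rationality $\ia_l(\phi)\in\Q\cdot\ia_3(\phi)$ via $\ia_3(\phi)\ia_p(\phi)=\ia_{3p}(\sigma)-\ia_3(\sigma)\ia_p(\sigma)$, and the check $R\neq0$) explicit where the paper leaves it to the reader.
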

\begin{proof}The assertion for $P_{26;1}$ follows from
\begin{align*}&\big\{\z\frac{f+g}2\,\big|\,\cP_{26}(2;1)=\{f,g\}\big\}\\
&\subset\cS_{26}(2)\cap(\s{q+2^{12}q^2+189924q^3+4^{12}q^4+370976550q^5+\C[[q]]q^6})\\
&=\{\Delta_2\iC_2(H(\iC_2^4+468d)-75\cdot161d^2)\},\end{align*}
\begin{align*}&\big\{\z\frac{f-g}2\,\big|\,\cP_{26}(2;1)=\{f,g\}\big\}\\
&\subset\{\phi\in\cS_{26}(2) \,|\, \s{\ia_1(\phi)=\ia_2(\phi)=\ia_4(\phi)=0,\,\ia_3(\phi)^2=4800^2\cdot106705,\,\ia_5(\phi)=-324\ia_3(\phi)}\}\\
&=\{\pm75\sq{106705}\Delta_2\iC_2d^2\}.\end{align*}
\end{proof}

\begin{Prop}With  $P_{k;i}=\cP_k(3;i)/\Delta_3$, $d=3\Delta_3$, $G=\iF_3(\iF_3^3-54\alpha_3)$ and
\[I=\iC_3^3+64d=\iE_4\iE_4^{(3)}/\iC_3,\]
we get
\[P_{6;1}=\{1\},\]
\[P_{8;3}=\{\iC_3\},\]
\begin{align*}P_{10;1}&=\{\iC_3^2\},\\
P_{10;3}&=\{G\},\end{align*}
\[P_{12;3}=\{\iC_3^3+16d\},\]
\begin{align*}P_{14;1}&=\big\{\iC_3(\iC_3^3-(23\pm\sq{1969})d)\big\},\\
P_{14;3}&=\{\iC_3^2G\},\end{align*}
\begin{align*}P_{16;1}&=\{G(\iC_3^3-24d)\},\\
P_{16;3}&=\{\iC_3^2(\iC_3^3-96d)\},\end{align*}
\begin{align*}P_{18;1}&=\big\{\iC_3^3I+(13\pm\sq{14569})(\iC_3^3-8d)d\big\},\\
P_{18;3}&=\{\iC_3GI\},\end{align*}
\begin{align*}P_{20;1}&=\{\iC_3^2G(\iC_3^3-376d)\},\\
P_{20;3}&=\big\{\iC_3(\iC_3^6+91\iC_3^3d-3588d^2\pm\sq{87481}(\iC_3^3-12d)d)\big\},\end{align*}
\begin{align*}P_{22;1}&=\big\{\iC_3^2(I^2-(47\pm21\sq{649})(\iC_3^3-16d)d)\big\},\\
P_{22;3}&=\big\{G(\iC_3^6-198\iC_3^3d-6924d^2\pm762(\iC_3^3+2d)d)\big\},\end{align*}
\begin{align*}P_{24;1}&=\{\iC_3GI(\iC_3^3+296d)\},\\
P_{24;3}&=\big\{\iC_3^3I(\iC_3^3-256d)-(49\pm\sq{530401})(\iC_3^6-20\iC_3^3d-192d^2)d\big\},\end{align*}
\[P_{26;3}=\big\{\iC_3^2G(\iC_3^6-74\iC_3^3d-58692d^2\pm2\sq{1287001}(\iC_3^3-6d)d)\big\},\]
\begin{align*}P_{28;1}&=\big\{G(\iC_3^9+504\iC_3^6d+171936\iC_3^3d^2+807168d^3\pm48\sq{6469}(\iC_3^6-10\iC_3^3d+368d^2)d)\big\},\\
P_{28;3}&=\big\{\iC_3^2(\iC_3^9+3555\iC_3^6d-277524\iC_3^3d^2+6354048d^3\pm21\sq{30001}(\iC_3^6-28\iC_3^3d+2432d^2)d)\big\},\end{align*}
\[P_{30;3}=\big\{\iC_3GI(\iC_3^6-7598\iC_3^3d+60036d^2\pm14\sq{77089}(\iC_3^3-78d)d)\big\},\]
\begin{align*}P_{32;1}&=\big\{\iC_3^2G(I(\iC_3^6+160680\iC_3^3d-11582976d^2)\\[-2pt]
&\6-4(41841\pm\sq{11501281})(\iC_3^6-18\iC_3^3d-4368d^2)d)\big\},\end{align*}
\begin{align*}P_{36;1}&=\big\{\iC_3GI(\iC_3^9-10256\iC_3^6d+15269616\iC_3^3d^2-92214912d^3\\[-2pt]
&\6\pm56\sq{2196841}(\iC_3^6-90\iC_3^3d-912d^2)d)\big\}.\end{align*}
\end{Prop}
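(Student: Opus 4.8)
The plan is to follow, almost verbatim, the template used for the preceding Proposition (the level-$2$ case) and for the $P_{24}$ computation at level $1$. Fix a weight $k$ and an index $i$ with $\cP_k(3;i)$ appearing in the statement; by the table of Section~2 one has $\#\cP_k(3;i)\in\{1,2\}$ for every such weight. If $\cP_k(3;i)=\{f\}$, then $f$ is the form to be identified; if $\cP_k(3;i)=\{f,g\}$, set $\sigma=\z\frac{f+g}2$ and $\phi=\z\frac{f-g}2$, both in $\cS_k(3)$, so that $\cP_k(3;i)=\{\sigma+\phi,\,\sigma-\phi\}$, and identify $\sigma$ and $\phi$ separately. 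In every case the task thus reduces to pinning down finitely many explicit elements of the finite-dimensional space $\cS_k(3)$.

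To make $\cS_k(3)$ explicit I would use the presentation $\cM(3)\simeq(\C[\iF_3,\alpha_3]^{[1,3]})_{2\Z}$ together with $\cS(3)=(\Delta_3)$: as $\Delta_3\in\cS_6(3)$, this gives $\cS_k(3)=\Delta_3\,\cM_{k-6}(3)$, with monomial basis $\{\Delta_3\iF_3^a\alpha_3^b : a+3b=k-6\}$. Comparing $q$-expansions inside this ring yields the auxiliary identities $\iF_3^2=\iC_3$, $G=\iF_3(\iF_3^3-54\alpha_3)$ and $I=\iC_3^3+64d=\iE_4\iE_4^{(3)}/\iC_3$ with $d=3\Delta_3$ (each a finite check), which only rewrite that basis as the $\iC_3,G,I,d$-basis in which the answers are stated. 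The structural input I would rely on is the obvious one: an element of $\cS_k(3)$ is determined by its Fourier coefficients at $q,\dots,q^{\dim\cS_k(3)}$.

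For the symmetric part, $\ia_l(\sigma)=\z\frac12\,\ia_l(\sum\cP_k(3;i))$ and the right-hand side is computable for every $l$ by \cite{S}; evaluating it for $l=1,\dots,\dim\cS_k(3)$ and solving the resulting linear system expresses $\sigma$ (or $f$) in closed form. For the antisymmetric part I would feed $\sigma$ into the Lemma above: since $\ia_1(f)=\ia_1(g)=1$ and, $3$ being a bad prime, $\ia_{3^n}(f)=\ia_{3^n}(g)=(\pm3^\vk)^n$, one gets $\ia_1(\phi)=\ia_{3^n}(\phi)=0$; taking $p=2$ gives $\ia_2(\phi)^2=\ia_4(\sigma)-\ia_2(\sigma)^2+2^{k-1}$, a known rational number; and the relations $\ia_2(\phi)\ia_m(\phi)=\ia_{2m}(\sigma)-\ia_2(\sigma)\ia_m(\sigma)$ for $(m,6)=1$, together with $\ia_4(\phi)=2\ia_2(\sigma)\ia_2(\phi)$ and $\ia_{3^a m}(\phi)=(\pm3^\vk)^a\ia_m(\phi)$, exhibit every $\ia_m(\phi)$ as a known rational multiple of $\ia_2(\phi)$. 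Hence these linear conditions cut $\cS_k(3)$ down to a line $\C h$ with $h$ an explicit rational form, $\ia_2(\phi)^2/\ia_2(h)^2=D$ forces $\phi=\pm\sqrt{D}\,h$, and $\cP_k(3;i)=\{\sigma\pm\sqrt{D}\,h\}$ gives the stated formula. I would write out the case $P_{14;1}$ (the first with a nontrivial discriminant) in full and note that the remaining weights go identically.

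The difficulty here is purely one of volume: for each of the $\sim 30$ weights one must extract enough Fourier coefficients of $\sum\cP_k(3;i)$ from \cite{S} (these carry the relevant trace/class-number data), solve a linear system in $\cS_k(3)$ of size $\sim k/3$ — dimension $11$ already at weight $36$ — and isolate the squarefree part $D$ of the rational number $\ia_2(\phi)^2$. One preliminary point deserves attention: that $\#\cP_k(3;i)\le 2$ for exactly the listed weights, so that the splitting into $\sigma$ and $\phi$ is available at all; this can be read off from the dimension formulas for $\cS_k(3)$ and from \cite{S}, and it is why no weight with a cubic or larger Hecke field occurs here. The auxiliary identities in $\iC_3,G,I$ used above are each, once more, a finite $q$-expansion verification in $\cM(3)$.
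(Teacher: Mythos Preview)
Your proposal is correct and follows essentially the same approach as the paper: the paper's own proof is precisely the worked $P_{14;1}$ example you describe, pinning down $\sigma=(f+g)/2$ by its first few Fourier coefficients inside $\cS_{14}(3)$ and then $\phi=(f-g)/2$ by the constraints $\ia_1(\phi)=\ia_3(\phi)=0$ and $\ia_2(\phi)^2=3^2\cdot1969$ coming from the Lemma. Your write-up is in fact more detailed than the paper's sketch, but the method is identical.
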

\begin{proof}The assertion for $P_{14;1}$ follows from
\begin{align*}\big\{\z\frac{f+g}2\,\big|\,\cP_{14}(3;1)=\{f,g\}\big\}&\subset\cS_{14}(3)\cap(\s{q-27q^2+3^6q^3+\C[[q]]q^4})\\
&=\{\Delta_3\iC_3(\iC_3^3-23d)\},\end{align*}
\begin{align*}\big\{\z\frac{f-g}2\,\big|\,\cP_{14}(3;1)=\{f,g\}\big\}&\subset\{\phi\in\cS_{14}(3) \,|\, \s{\ia_1(\phi)=\ia_3(\phi)=0,\,\ia_2(\phi)^2=3^2\cdot1969}\}\\
&=\{\pm\sq{1969}\Delta_3d\}.\end{align*}
\end{proof}

\begin{Prop}With $P_k=\cP_k(4)/\Delta_4$, $d=192\Delta_1$, we get
\begin{align*}P_8&=\{1\}^{(2)},\\
P_{10}&=\{\iE_4\}^{(2)},\\
P_{12}&=\{\iE_6\}^{(2)},\\
P_{14}&=\{\iE_8\}^{(2)},\\
P_{16}&=\{\iE_{10}\}^{(2)},\\
P_{20}&=\{\iE_{14}\}^{(2)},\end{align*}
\begin{align*}P_{18}&=\big\{\iE_4^3-(19\pm\sq{9361})d\big\}^{(2)},\\
P_{22}&=\big\{\iE_4(\iE_4^3+2(83\pm7\sq{2161})d)\big\}^{(2)},\\
P_{24}&=\big\{\iE_6(\iE_4^3+(443\pm\sq{2473249})d)\big\}^{(2)},\\
P_{26}&=\big\{\iE_8(\iE_4^3-9(261\pm\sq{358121})d)\big\}^{(2)},\\
P_{28}&=\big\{\iE_{10}(\iE_4^3-2(631\pm7\sq{1059289})d)\big\}^{(2)},\\
P_{32}&=\big\{\iE_{14}(\iE_4^3-3(27089\pm\sq{2595343921})d)\big\}^{(2)},\end{align*}
where $X^{(2)}=\{x^{(2)} \,|\, x\in X\}$.
\end{Prop}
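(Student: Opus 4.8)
The plan is to run, weight by weight, exactly the arguments already displayed for $P_{24}$ (in the $\cP_k(1)$ case), $P_{14;1}$ (in the $\cP_k(3;i)$ case), and for $\cP_{12}(1)=\{\Delta_1\}$, now over $\Gamma_0(4)$. First I record the structural input. Since $\cS(4)=(\Delta_4)$ we have $\cS_k(4)=\Delta_4\cM_{k-6}(4)$, and since $\cM(4)\simeq(\C[\iF_4,\alpha_4]^{[1,2]})_{2\Z}$ this space carries the explicit basis $\{\Delta_4\iF_4^a\alpha_4^b\mid a+2b=k-6,\ a\text{ even}\}$, of dimension $\dim\cM_{k-6}(4)=(k-6)/2+1$. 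Because $2^2\mid 4$, every $f\in\cP_k(4)$ has $\ia_{2^n}(f)=0$ for $n\ge1$, hence $\ia_m(f)=0$ for all even $m$ by multiplicativity; so $f$ — and likewise $\Delta_4\in\cP_6(4)$ — is supported on odd powers of $q$. As $\Delta_4=q+O(q^3)$ is $q$ times a unit of $\C[[q]]$, the quotient $f/\Delta_4$ lies in $\C[[q^2]]$, which is exactly what the notation $P_k=\{\,\cdot\,\}^{(2)}$ encodes: $f=\Delta_4\,g^{(2)}$ with $g^{(2)}(q)=g(q^2)$. Conversely each candidate $\Delta_4\,g^{(2)}$ in the statement really lies in $\cS_k(4)$, since $\iE_4^{(2)},\iE_6^{(2)},\Delta_1^{(2)}$ are modular forms on $\Gamma_0(4)$ lying in $\C[[q^2]]$; so it remains to identify these candidates with the primitive forms, for which the counts $\#\cP_k(4)$ from the tables in \S2 tell us there is one newform per $\pm$-free entry and a conjugate pair per $\pm$ entry.

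For a $\pm$-free entry, $\cP_k(4)=\{f\}$ with $f=\sum\cP_k(4)$, defined over $\Q$ and with all Fourier coefficients computable by \cite{S}. I would compare $f$ with the claimed $\Delta_4\,g^{(2)}$: their difference is $\Delta_4$ times an element of $\cM_{k-6}(4)$, and once finitely many initial Fourier coefficients of $f$ and $\Delta_4\,g^{(2)}$ agree (the first $\dim\cM_{k-6}(4)$ suffice, and they are got from the known $q$-expansions of $\iE_4,\iE_6,\iC_4,\Delta_1,\Delta_4$), that element vanishes at $\infty$ to order $>(k-6)/2$, hence is $0$ by the valence formula on $X_0(4)$. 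Thus $\cP_k(4)=\{\Delta_4\,g^{(2)}\}$, i.e.\ $P_k=\{g\}^{(2)}$.

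For a $\pm$ entry, $\cP_k(4)=\{f,g\}$ is a single $\mathrm{Gal}(\overline{\Q}/\Q)$-orbit; put $\sigma=\tfrac{f+g}2$, $\phi=\tfrac{f-g}2$ as in the Lemma. Then $\sigma=\tfrac12\sum\cP_k(4)$ has rational coefficients computed by \cite{S}, and by the argument just described it is the unique element of $\cS_k(4)$ with the right initial coefficients, namely $\Delta_4$ times the $^{(2)}$ of the rational part of the asserted formula. For $\phi$, the Lemma gives, for any fixed odd prime $p$,
\[\ia_p(\phi)^2=\z-\ia_p(\sigma)^2+\ia_{p^2}(\sigma)+p^{k-1},\]
a rational number, together with $\ia_{p^2}(\phi)=2\ia_p(\sigma)\ia_p(\phi)$, $\ia_{p^3}(\phi)=(2\ia_p(\sigma)^2+\ia_{p^2}(\sigma)-p^{k-1})\ia_p(\phi)$ and the multiplicative relation $\ia_{lm}(\phi)=\ia_l(\sigma)\ia_m(\phi)+\ia_l(\phi)\ia_m(\sigma)$; iterating, every $\ia_n(\phi)$ is a $\sigma$-determined rational multiple of the single value $\ia_p(\phi)$, so $\phi=\ia_p(\phi)\,\psi$ with $\psi\in\cS_k(4)$ rational, and $\ia_p(\phi)=\pm\sqrt{\ia_p(\phi)^2}$ pins $\phi$ down up to sign. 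Writing $\ia_p(\phi)^2$ as (rational square)$\times$(squarefree integer $m$) and absorbing the square factor into $\psi$, one gets $\phi=\pm\sqrt{m}\,\Psi^{(2)}$ with $m$ the radicand appearing in the statement ($9361,2473249,\dots$). Then $\{f,g\}=\{\sigma+\phi,\sigma-\phi\}$, so $P_k=\{(\text{rational part})\pm(\sqrt{m}\text{-term})\}^{(2)}$, a set of cardinality $2=\#\cP_k(4)$, as claimed.

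The conceptual content is slight; I expect the only hard part to be purely computational. The bulk of the work will be, for each listed weight, to write out the $q$-expansions of the candidate forms — those built from $\iE_4,\iE_6,\iC_4,d=192\Delta_1$, then twisted by $^{(2)}$ and multiplied by $\Delta_4$ — and to verify that finitely many of their Fourier coefficients match those of $\sum\cP_k(4)$ produced by \cite{S}, including the heavy discriminant arithmetic in the high-weight cases such as the $\sqrt{2595343921}$ in $P_{32}$. This is routine bookkeeping along the lines of the sample computations shown for $\cP_{12}(1)$, $P_{24}$ and $P_{14;1}$, the only extra point of care being, in the $\pm$ cases, to choose the auxiliary prime $p$ so that $\ia_p(\phi)^2$ displays the stated squarefree radicand cleanly.
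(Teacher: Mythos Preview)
Your proposal is correct and follows essentially the same approach as the paper: split into $\sigma=\frac{f+g}2$ and $\phi=\frac{f-g}2$, pin down $\sigma$ by matching finitely many Fourier coefficients of $\sum\cP_k(4)$ against the explicit basis of $\cS_k(4)=\Delta_4\cM_{k-6}(4)$, and pin down $\phi$ up to sign via the Lemma's relations $\ia_p(\phi)^2=-\ia_p(\sigma)^2+\ia_{p^2}(\sigma)+p^{k-1}$ and $\ia_{p^j}(\phi)\in\Q\cdot\ia_p(\phi)$; the paper carries this out explicitly for $P_{18}$ with $p=3$. Your added remark that $2^2\mid4$ forces all even Fourier coefficients of $f\in\cP_k(4)$ to vanish, whence $f/\Delta_4\in\C[[q^2]]$, is a useful clarification of the $\{\cdot\}^{(2)}$ notation that the paper leaves implicit.
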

\begin{proof}The assertion for $P_{18}(4)$ follows from
\begin{align*}&\big\{\z\frac{f+g}2\,\big|\,\cP_{18}(4)=\{f,g\}\big\}\\
&\subset\cS_{18}(4)\cap(\s{q-2940q^3+302022q^5+12675080q^7+\C[[q]]q^8})\\
&=\{\Delta_4(\iE_4^3-192\cdot19\Delta_1)^{(2)}\},\end{align*}
\begin{align*}&\big\{\z\frac{f-g}2\,\big|\,\cP_{18}(4)=\{f,g\}\big\}\\
&\subset\{\phi\in\cS_{18}(4) \,|\, \ia_1(\phi)=\ia_2(\phi)=\ia_4(\phi)=\ia_6(\phi)=0,\,\ia_3(\phi)^2=192^2\cdot9361,\\[-1pt]
&\6\6\6\6\ia_5(\phi)=-36\ia_3(\phi),\,\ia_7(\phi)=594\ia_3(\phi)\}\\
&=\{\pm192\sq{9361}\Delta_4\Delta_1^{(2)}\}.\end{align*}
\end{proof}

\begin{Prop}With $P_{k;i}=\cP_k(6;i)/\Delta_6$, $d=2\Delta_6$, $G_n=\frac1{1+n}(\iC_3+n\iC_3^{(2)})$ and
\[C=\z\frac14(\iC_2+3\iC_2^{(3)})=\iF_3\iF_3^{(2)},\]
\[H=C^2+6d=\iC_2\iC_2^{(3)},\]
\[I=C^2-27d=G_{-3}G_{-\frac43},\]
\[J=C^2-50d=\z\frac16(-\iC_2+7\iC_2^{(3)})\frac12(-7\iC_2+9\iC_2^{(3)}),\]
we get
\[P_{4;6}=\{1\},\]
\[P_{6;3}=\{C\},\]
\[P_{8;1}=\{CG_2\},\]
\[P_{10;2}=\{G_2(C^2-15d)\},\]
\begin{align*}P_{12;1}&=\{CG_2H\},\\
P_{12;2}&=\{CG_{-2}^3\},\\
P_{12;6}&=\{C^2I\},\end{align*}
\[P_{14;3}=\{C(C^4+18C^2d-390d^2)\},\]
\begin{align*}P_{16;1}&=\{C^3G_2(C^2+48d)\},\\
P_{16;3}&=\{G_2G_{-2}(C^4+57C^2d-540d^2)\},\\
P_{16;6}&=\{H(C^4-87C^2d+660d^2)\},\end{align*}
\begin{align*}P_{18;2}&=\{C^2G_2I(C^2-120d)\},\\
P_{18;3}&=\{C(C^6+108C^4d-3222C^2d^2+19332d^3)\},\\
P_{18;6}&=\{CG_2G_{-2}(C^4-138C^2d-1242d^2)\},\end{align*}
\begin{align*}P_{20;1}&=\{CG_2H(C^4+228C^2d-180d^2)\},\\
P_{20;2}&=\{CG_{-2}(C^6-270C^4d+6756C^2d^2+29160d^3)\},\\
P_{20;6}&=\{C^8-279C^6d-696C^4d^2+67500C^2d^3-243000d^4\},\end{align*}
\begin{align*}P_{22;1}&=\{G_{-2}(C^8+495C^6d+9714C^4d^2+97704C^2d^3-605880d^4)\},\\
P_{22;2}&=\{G_2(C^8-537C^6d+24138C^4d^2-58968C^2d^3+1104840d^4)\},\\
P_{22;3}&=\{C^3(C^6+486C^4d-24306C^2d^2+344160d^3)\},\end{align*}
\begin{align*}P_{24;1}&=\big\{CG_2(C^8+996C^6d+23004C^4d^2-324(7067\pm\sq{1296640489})d^4)\big\},\\
P_{24;2}&=\{CG_{-2}H(C^6-1050C^4d+64008C^2d^2-164592d^3)\},\\
P_{24;3}&=\{C^2G_2G_{-2}I(C^4+1032C^2d-28620d^2)\},\\
P_{24;6}&=\{C^2HI(C^4-1032C^2d-43020d^2)\},\end{align*}
\begin{align*}P_{26;2}&=\{G_2(C^{10}-2079C^8d+182304C^6d^2-456300C^4d^3+22275000C^2d^4-200475000d^5)\},\\
P_{26;3}&=\{C(C^{10}+2016C^8d-183738C^6d^2+4829436C^4d^3-81831384C^2d^4+153465840d^5)\},\\
P_{26;6}&=\{CG_2G_{-2}(C^8-2070C^6d-102006C^4d^2+3059424C^2d^3-22028760d^4)\},\end{align*}
\begin{align*}P_{28;1}&=\{C^3G_2H(C^6+4056C^4d+264024C^2d^2-341280d^3)\},\\
P_{28;2}&=\{C^3G_{-2}(C^8-4122C^6d+476616C^4d^2+3203568C^2d^3+96759360d^4)\},\\
P_{28;3}&=\{G_2G_{-2}H(C^8+4065C^6d-496512C^4d^2+16081308C^2d^3+41805720d^4)\},\\
P_{28;6}&=\big\{C^2I(C^8-4104C^6d-394296C^4d^2+5175000C^2d^3-27945000d^4)\\[-2pt]
&\6+324(9539\pm\sq{13546577521})(7C^4-49C^2d+60d^2)d^4\big\},\end{align*}
\begin{align*}P_{30;1}&=\{C^2G_{-2}I(C^8+8190C^6d+1236924C^4d^2+50173344C^2d^3-161954640d^4)\},\\
P_{30;2}&=\{C^2G_2I(C^8-8202C^6d+1220508C^4d^2+36796032C^2d^3-225899280d^4)\},\\
P_{30;3}&=\big\{C(G_2^2J(C^8+8206C^6d-1022586C^4d^2+10703840C^2d^3+57342000d^4)\\[-2pt]
&\6-4(90733873\pm81\sq{10891418844289})(C^4-10C^2d+30d^2)d^4)\big\},\\
P_{30;6}&=\{CG_2G_{-2}(C^{10}-8220C^8d-1023462C^6d^2+49302756C^4d^3\\[-2pt]
&\6-162213840C^2d^4+2919849120d^5)\},\end{align*}
\begin{align*}
P_{32;1}&=\big\{CG_2(H(C^{10}+16338C^8d+2949084C^6d^2+28920888C^4d^3-145598472C^2d^4\\[-2pt]
&\6+344068560d^5)+324(369169\pm\sq{227942548026769})(C^4-12C^2d+60d^2)d^4)\big\},\\
P_{32;2}&=\{CG_{-2}H(C^{10}-16422C^8d+4095708C^6d^2+41893272C^4d^3-2940431976C^2d^4\\[-2pt]
&\6+12666384720d^5)\},\\
P_{32;3}&=\{G_2G_{-2}(C^{12}+16353C^{10}d-3980124C^8d^2+197992728C^6d^3-4226320800C^4d^4\\[-2pt]
&\6-3717900000C^2d^5+22307400000d^6)\},\\
P_{32;6}&=\{H(C^{12}-16431C^{10}d-2930988C^8d^2+242453592C^6d^3-3522376800C^4d^4\\[-2pt]
&\6+32343300000C^2d^5-52925400000d^6)\},\end{align*}
\begin{align*}P_{34;1}&=\{G_{-2}(C^{14}+32733C^{12}d+9844608C^{10}d^2+319245480C^8d^3-21739219992C^6d^4\\[-2pt]
&\6+220960914120C^4d^5-838619516160C^2d^6+133945526880d^7)\},\\
P_{34;2}&=\big\{G_2(H(C^{12}-32817C^{10}d+12138894C^8d^2+137804868C^6d^3-11111132880C^4d^4\\[-2pt]
&\6+50000097960C^2d^5-188474940720d^6)\\[-2pt]
&\6+324(406955\pm\sq{28748353542841})(7C^6-105C^4d+1560C^2d^2-1980d^3)d^4)\big\},\\
P_{34;3}&=\{C^3(C^{12}+32724C^{10}d-11973390C^8d^2+875018772C^6d^3+9457558704C^4d^4\\[-2pt]
&\6-402776740512C^2d^5+1395244103040d^6)\},\\
P_{34;6}&=\{C^3G_2G_{-2}(C^{10}-32802C^8d-9876546C^6d^2+744598656C^4d^3-31177759248C^2d^4\\[-2pt]
&\6+248604863040d^5)\},\end{align*}
\begin{align*}P_{36;1}&=\big\{CG_2H(C^{12}+65484C^{10}d+29337012C^8d^2+972806112C^6d^3\\[-2pt]
&\6+26965965960C^4d^4-686899802880C^2d^5+2060699408640d^6\\[-2pt]
&\6\pm3240\sq{1088738810239369}(C^4-24C^2d+72d^2)d^4)\big\},\\
P_{36;2}&=\big\{CG_{-2}(G_2^2J(C^{10}-65522C^8d+30909936C^6d^2+2965143488C^4d^3\\[-2pt]
&\6-15435688560C^2d^4-117729828000d^5)\\[-2pt]
&\6+280(16330387\pm81\sq{102536199529})(23C^6-230C^4d+168C^2d^2-432d^3)d^4)\big\},\\
P_{36;3}&=\{C^2G_2G_{-2}HI(C^8+65520C^6d-32874552C^4d^2+2229716808C^2d^3+163616760d^4)\},\\
P_{36;6}&=\big\{C^2I(G_2^2J(C^8-65504C^6d-34839096C^4d^2+522120200C^2d^3-10174425000d^4)\\[-2pt]
&\6-28(303745651\pm81\sq{304948008306841})(C^4+8C^2d-60d^2)d^4)\big\},\end{align*}
\begin{align*}P_{38;1}&=\{G_{-2}(C^{16}+131031C^{14}d+92399298C^{12}d^2+6027991488C^{10}d^3\\[-2pt]
&\6+130642875000C^8d^4+450897471936C^6d^5-68227587057600C^4d^6\\[-2pt]
&\6+152381849400000C^2d^7-304763698800000d^8)\},\\
P_{38;2}&=\{G_2(C^{16}-131121C^{14}d+102361050C^{12}d^2+5117487552C^{10}d^3\\[-2pt]
&\6+237432632184C^8d^4-10600042361472C^6d^5+74142791812800C^4d^6\\[-2pt]
&\6-212495481000000C^2d^7+764983731600000d^8)\},\\
P_{38;3}&=\big\{C(C^{16}+131022C^{14}d-102490266C^{12}d^2+12164659488C^{10}d^3\\[-2pt]
&\6-380291429196C^8d^4+1177892029224C^6d^5+119740488933960C^4d^6\\[-2pt]
&\6-666864146661120C^2d^7+785994621662880d^8\pm324\sq{251204923691569}\\[-2pt]
&\6(179C^8-3938C^6d+25830C^4d^2-77760C^2d^3-50760d^4)d^4)\big\},\\
P_{38;6}&=\{CG_2G_{-2}(C^{14}-131112C^{12}d-92529162C^{10}d^2+11059273980C^8d^3\\[-2pt]
&\6+297096151968C^6d^4-1692160612560C^4d^5-10440947116080C^2d^6\\[-2pt]
&\6-91376673078240d^7)\},\end{align*}
\begin{align*}
P_{40;1}&=\big\{C^3G_2(H(C^{12}+262086C^{10}d+277197408C^8d^2+21849570720C^6d^3\\[-2pt]
&\6+132122012112C^4d^4-738516509856C^2d^5+1639679708160d^6)\\[-2pt]
&\6-324(6958215291\pm13\sq{692769757131108721})\\[-2pt]
&\6(C^2-12d)(C^4-12C^2d+120d^2)d^4)\big\},\\
P_{40;2}&=\{C^3G_{-2}H(C^{12}-262194C^{10}d+301838592C^8d^2+23702240736C^6d^3\\[-2pt]
&\6-3698614891440C^4d^4+54030754022112C^2d^5-261744045511680d^6)\},\\
P_{40;3}&=\big\{G_2G_{-2}(C^2I(C^{12}+262128C^{10}d-292924404C^8d^2+34997362848C^6d^3\\[-2pt]
&\6+1258474844880C^4d^4+2837559600000C^2d^5-38307054600000d^6)\\[-2pt]
&\6+3240(3673159\pm\sq{528575249909281})\\[-2pt]
&\6(47C^8-705C^6d+312C^4d^2+252C^2d^3-55080d^4)d^4)\big\},\\
P_{40;6}&=\big\{H(C^{16}-262203C^{14}d-276932436C^{12}d^2+47147194908C^{10}d^3\\[-2pt]
&\6-1320886033704C^8d^4+33825221370360C^6d^5-456521575910400C^4d^6\\[-2pt]
&\6+1778443757117280C^2d^7-2609195237668800d^8\pm42120\sq{12925296409}\\[-2pt]
&\6(623C^8-19313C^6d+132360C^4d^2-59652C^2d^3-487080d^4)d^4)\big\},\end{align*}
\begin{align*}P_{42;1}&=\{C^2G_{-2}I(C^{14}+524268C^{12}d+864880668C^{10}d^2+132334868880C^8d^3\\[-2pt]
&\6+914696054928C^6d^4+156094524005760C^4d^5-1202185035360000C^2d^6\\[-2pt]
&\6+1243642865984640d^7)\},\\
P_{42;2}&=\big\{C^2G_2I(C^{14}-524316C^{12}d+882706572C^{10}d^2+125079755472C^8d^3\\[-2pt]
&\6+7181374424148C^6d^4-86498474400000C^4d^5+99353839559040C^2d^6\\[-2pt]
&\6-3652218972538560d^7\pm2268\sq{38351621994905041}\\[-2pt]
&\6(31C^6-4320C^2d^2+6480d^3)d^4)\big\},\\
P_{42;3}&=\big\{C(G_2^2J(C^{14}+524284C^{12}d-870122310C^{10}d^2+129229013732C^8d^3\\[-2pt]
&\6-404642618736C^6d^4-36238382888832C^4d^5-5192088841440C^2d^6\\[-2pt]
&\6-259604442072000d^7)+40(10745740153\pm81\sq{164084246636718769})\\[-2pt]
&\6(5C^{10}-140C^8d+4218C^6d^2-46332C^4d^3+94608C^2d^4-189216d^5)d^4)\big\}\\
&=\big\{C(H^2(C^{14} + 524220C^{12}d - 903675654C^{10}d^2 + 185353038180C^8d^3\\[-2pt]
&\6 - 10204073665776C^6d^4 + 135924676359552C^4d^5 - 100569818907360C^2d^6\\[-2pt]
&\6 + 120683782688832d^7)+324(1820959258\pm10\sq{164084246636718769})\\[-2pt]
&\6(5C^{10}-140C^8d+4218C^6d^2-46332C^4d^3+94608C^2d^4-189216d^5)d^4)\big\}\\
P_{42;6}&=\big\{CG_2G_{-2}(H(C^{14}-524340C^{12}d-848102022C^{10}d^2+170131902660C^8d^3\\[-2pt]
&\6-2424032639280C^6d^4+114766797805056C^4d^5-1761920698666464C^2d^6\\[-2pt]
&\6+4530653225142336d^7)-2268(282530698\pm10\sq{24132013874469601})\\[-2pt]
&\6(5C^8-90C^6d+918C^4d^2-2592C^2d^3+11664d^4)d^4)\big\},\end{align*}
\begin{align*}P_{44;1}&=\big\{CG_2H(C^{16}+1048512C^{14}d+2555321184C^{12}d^2+420402270528C^{10}d^3\\[-2pt]
&\6-14726546206344C^8d^4+827817754381920C^6d^5-9874537975375200C^4d^6\\[-2pt]
&\6+54161454237258240C^2d^7-279196762888612800d^8\pm3240\\[-2pt]
&\6\sq{475120385535394312561}(C^8-36C^6d+420C^4d^2-2304C^2d^3+11880d^4)d^4)\big\},\\
P_{44;2}&=\big\{CG_{-2}(G_2^2J(C^{14}-1048574C^{12}d+2605652100C^{10}d^2+590479485896C^8d^3\\[-2pt]
&\6+6522392457504C^6d^4-54904234242240C^4d^5+1378537859088000C^2d^6\\[-2pt]
&\6-5978897258400000d^7)-280(1630617059\pm81\sq{3665860149759721})\\[-2pt]
&\6(47C^{10}-1034C^8d+27252C^6d^2-180792C^4d^3+566136C^2d^4-71280d^5)d^4)\big\},\\
P_{44;3}&=\{G_2G_{-2}H(C^{16}+1048521C^{14}d-2665418688C^{12}d^2+657964797876C^{10}d^3\\[-2pt]
&\6-139310386385400C^8d^4+2792259036461232C^6d^5-16979446706817600C^4d^6\\[-2pt]
&\6+132419827128600000C^2d^7-264839654257200000d^8)\},\\
P_{44;6}&=\big\{C^2I(C^{14}-1048608C^{12}d-2588873376C^{10}d^2+596439951768C^8d^3\\[-2pt]
&\6-6798187526184C^6d^4+795734192520000C^4d^5-10083167786520000C^2d^6\\[-2pt]
&\6-3390396750000000d^7)+324(81274873\pm7\sq{531233135911921})\\[-2pt]
&\6(121C^{12}-3751C^{10}d+628452C^8d^2-10200168C^6d^3+34689600C^4d^4\\[-2pt]
&\6-89100000C^2d^5+145800000d^6)d^4\big\},\end{align*}

\end{Prop}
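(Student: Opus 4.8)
We follow the method of the previous propositions. Fix $k$ and a divisor $i$ of $6$; write $P=P_{k;i}$ and $c=\cP_k(6;i)$, and recall from Section~2 that $\#c\in\{1,2\}$. By \cite{S} we may compute $\ia_l(\sum c)=\sum_{f\in c}\ia_l(f)$ for every $l\ge1$, and since $6$ is square-free the defining condition of $\cP_k(6;i)$ gives $\ia_p(f)=-p^\vk$ for $p\in\P(i)$ and $\ia_p(f)=p^\vk$ for $p\in\P(6/i)$, independently of $f\in c$. Since $\cM(6)\simeq(\C[\iF_3,\alpha_6]^{[1,1]})_{2\Z}$, for even $k$ we have $\cM_{k-4}(6)=\bigoplus_{a+b=k-4}\C\,\iF_3^a\alpha_6^b$; as $\iF_3=1+O(q)$ and $\alpha_6=q+O(q^2)$ these monomials have pairwise distinct leading terms $q^b$, so an element of $\cM_{k-4}(6)$ is determined by its first $k-3$ Fourier coefficients. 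Since $\cS(6)=(\Delta_6)$ and $\Delta_6=q+O(q^2)$, we have $\cS_k(6)=\Delta_6\,\cM_{k-4}(6)$, so an element of $\cS_k(6)$ is pinned down by finitely many coefficients, and dividing by $\Delta_6$ recovers the corresponding element of $\cM_{k-4}(6)$.

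If $\#c=1$, say $c=\{f\}$, then $f=\sum c$ is explicitly known, and $f/\Delta_6$ is identified with the stated polynomial in $C,G_{\pm2},H,I,J$ by comparing finitely many Fourier coefficients. If $c=\{f,g\}$, set $\sigma=\frac{f+g}2$ and $\phi=\frac{f-g}2$ in $\cS_k(6)$. Then $\ia_l(\sigma)=\frac12\ia_l(\sum c)$ is known for all $l$, so $\sigma/\Delta_6$ is identified in $\cM_{k-4}(6)$ exactly as above; this produces the $\pm$-free part of the displayed formula.

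It remains to locate $\phi$. From $\ia_1(f)=\ia_1(g)=1$ and $\ia_2(f)=\ia_2(g)$, $\ia_3(f)=\ia_3(g)$ we get $\ia_1(\phi)=\ia_2(\phi)=\ia_3(\phi)=0$, hence by the Lemma and multiplicativity $\ia_{2^n}(\phi)=\ia_{3^n}(\phi)=0$, and $\ia_l(\phi)=0$ whenever $l$ is a product of powers of $2$ and $3$. Choosing a prime $p\ge5$ with $\ia_p(\phi)\neq0$ (which exists as $f\neq g$; one may take $p=5$), the Lemma gives $\ia_p(\phi)^2=-\ia_p(\sigma)^2+\ia_{p^2}(\sigma)+p^{k-1}\in\Q$, a known number, and expresses $\ia_l(\phi)/\ia_p(\phi)$ as a known rational number for every $l$. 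Thus $\phi$ is determined by $\sigma$ up to a single scalar with known square, and matching the first $k-3$ coefficients identifies $\phi/\Delta_6$ as $\pm\sq{D}$ times a polynomial with rational coefficients in $C,G_{\pm2},H,I,J,d$, where $D$ is the integer displayed under the radical. Hence $c=\{\sigma\pm\phi\}$, so $P=\{(\sigma\pm\phi)/\Delta_6\}$, the displayed pair.

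These calculations are routine but grow large as the weight increases (here up to $k=72$): the linear system for $\sigma/\Delta_6$ and the square-root extraction for $\phi/\Delta_6$ involve polynomials of high degree in $d=2\Delta_6$, and this is the only genuine obstacle. One keeps it in hand by reducing every generator inside $\C[\iF_3,\alpha_6]$ through $\iF_3^{(2)}=\iF_3-6\alpha_6$ (so $C=\iF_3^2-6\iF_3\alpha_6$), $\Delta_6=\alpha_6(\iF_3-3\alpha_6)(\iF_3-4\alpha_6)(\iF_3-12\alpha_6)$, $H=C^2+6d$, $I=C^2-27d$, $J=C^2-50d$, and the corresponding expressions for $G_{\pm2}$, and then checking that each radical collapses to the asserted surd. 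Finally, the two displayed forms of $P_{42;3}$ are shown to agree by expanding both in $\C[\iF_3,\alpha_6]$.
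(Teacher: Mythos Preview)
Your argument is correct and follows the same strategy as the paper: identify $\sigma=\frac{f+g}2$ inside $\cS_k(6)=\Delta_6\cM_{k-4}(6)$ by matching enough Fourier coefficients (the paper illustrates this for $P_{24;1}$, listing coefficients through $q^{21}$, exactly your bound $k-3$), and pin down $\phi=\frac{f-g}2$ up to sign via Lemma~4 with $p=5$. One small slip: the weights in this proposition run only up to $k=44$, not $k=72$ (you may be thinking of the level~$2$ proposition).
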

\begin{proof}The assertion for $P_{24;1}$ follows from
\begin{align*}&\big\{\z\frac{f+g}2\,\big|\,\cP_{24}(6;1)=\{f,g\}\big\}\\
&\subset\cS_{24}(6)\cap(\s{q+2^{11}q^2+3^{11}q^3+4^{11}q^4+12624078q^5+6^{11}q^6+2882231384q^7+8^{11}q^8}\\[-1pt]
&\6\s{+\,9^{11}q^9+2^{11}12624078q^{10}+508560735012q^{11}+12^{11}q^{12}+1377630542438q^{13}}\\[-1pt]
&\6\s{+\,2^{11}2882231384q^{14}+3^{11}12624078q^{15}+16^{11}q^{16}+29152391885874q^{17}+18^{11}q^{18}}\\[-1pt]
&\6\s{-209169475284580q^{19}+4^{11}12624078q^{20}+3^{11}2882231384q^{21}+\C[[q]]q^{22}})\\
&=\{\Delta_6G_2C(C^8+996C^6d+23004C^4d^2-324\cdot7067d^4)\},\end{align*}
\begin{align*}&\big\{\z\frac{f-g}2\,\big|\,\cP_{24}(6;1)=\{f,g\}\big\}\subset\{\phi\in\cS_{24}(6) \,|\\
&\6\s{\ia_1(\phi)=\ia_2(\phi)=\ia_3(\phi)=\ia_4(\phi)=\ia_6(\phi)=\ia_8(\phi)=\ia_9(\phi)=\ia_{12}(\phi)=\ia_{16}(\phi)=\ia_{18}(\phi)=0,}\\[-1pt]
&\6\s{\ia_5(\phi)^2=5184^2\cdot1296640489,\,\ia_{10}(\phi)=2^{11}\ia_5(\phi),\,\ia_{15}(\phi)=3^{11}\ia_5(\phi),\,\ia_{20}(\phi)=4^{11}\ia_5(\phi),}\\[-1pt]
&\6\s{\ia_7(\phi)=-25\ia_5(\phi),\,\ia_{14}(\phi)=2^{11}\ia_7(\phi),\,\ia_{21}(\phi)=3^{11}\ia_7(\phi),\,\ia_{11}(\phi)=-6050\ia_5(\phi),}\\[-1pt]
&\6\s{\ia_{13}(\phi)=15550\ia_5(\phi),\,\ia_{17}(\phi)=-801550\ia_5(\phi),\,\ia_{19}(\phi)=1361350\ia_5(\phi)}\}\\
&=\{\pm324\sq{1296640489}\Delta_6G_2Cd^4\}.\end{align*}
\end{proof}

\begin{Lem}Let $p$ be a prime such that $p\nmid N$. If $f_1,f_2,f_3\in\cP_k(N)$ and $\sigma=f_1+f_2+f_3$, $A_{m,n}=m\ia_{p^2}(\sigma)+np^{k-1}$, then we see
\begin{align*}&\VP{i\in\{1,2,3\}}(X-\ia_p(f_i))\\
&=X^3-\ia_p(\sigma)X^2+\z\frac12(\ia_p(\sigma)^2-A_{1,3})X\\
&\6-\z\frac16\big(\ia_p(\sigma)^3-\ia_p(\sigma)A_{3,5}+2\ia_{p^3}(\sigma)\big)\end{align*}
in $\C[X]$. If $f_1,f_2,f_3,f_4\in\cP_k(N)$ and $\sigma=f_1+f_2+f_3+f_4$, then
\begin{align*}&\VP{i\in\{1,2,3,4\}}(X-\ia_p(f_i))\\
&=X^4-\ia_p(\sigma)X^3+\z\frac12(\ia_p(\sigma)^2-A_{1,4})X^2\\
&-\z\frac16\big(\ia_p(\sigma)^3-\ia_p(\sigma)A_{3,8}+2\ia_{p^3}(\sigma)\big)X\\
&+\z\frac1{24}\big(\ia_p(\sigma)^4-2\ia_p(\sigma)^2A_{3,4}+8\ia_p(\sigma)\ia_{p^3}(\sigma)+3\ia_{p^2}(\sigma)A_{1,2}-6\ia_{p^4}(\sigma)\big).\end{align*}
If $f_1,f_2,f_3,f_4,f_5\in\cP_k(N)$ and $\sigma=f_1+f_2+f_3+f_4+f_5$, then
\begin{align*}&\VP{i\in\{1,2,3,4,5\}}(X-\ia_p(f_i))\\
&=X^5-\ia_p(\sigma)X^4+\z\frac12(\ia_p(\sigma)^2-A_{1,5})X^3\\
&\6-\z\frac16\big(\ia_p(\sigma)^3-\ia_p(\sigma)A_{3,11}+2\ia_{p^3}(\sigma)\big)X^2\\
&\6+\z\frac1{24}\big(\ia_p(\sigma)^4-2\ia_p(\sigma)^2A_{3,7}+8\ia_p(\sigma)\ia_{p^3}(\sigma)+3A_{1,4,5}-6\ia_{p^4}(\sigma)\big)X\\
&\6-\z\frac1{120}\big(\ia_p(\sigma)^5-10\ia_p(\sigma)^3A_{1,1}+20\ia_p(\sigma)^2\ia_{p^3}(\sigma)+5\ia_p(\sigma)A_{3,4,-1}\\
&\6\6-30\ia_p(\sigma)\ia_{p^4}(\sigma)-5\ia_{p^3}(\sigma)A_{4,1}+24\ia_{p^5}(\sigma)\big),\end{align*}
where $A_{l,m,n}=l\ia_{p^2}(\sigma)^2+m\ia_{p^2}(\sigma)p^{k-1}+np^{2(k-1)}$.
\end{Lem}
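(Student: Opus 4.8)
The plan is to derive all three formulas from Newton's identities applied to the numbers $t_i:=\ia_p(f_i)$. Put $q:=p^{k-1}$, and for $r\in\{3,4,5\}$ let $e_j=e_j(t_1,\dots,t_r)$ be the elementary symmetric functions and $s_j=\VS{i=1}^r t_i^j$ the power sums; then $\VP{i=1}^r(X-t_i)=\VS{j=0}^r(-1)^je_jX^{r-j}$, and the universal Newton--Girard relations give
\begin{align*}
e_1&=s_1,\\
e_2&=\tfrac12(s_1^2-s_2),\\
e_3&=\tfrac16(s_1^3-3s_1s_2+2s_3),\\
e_4&=\tfrac1{24}(s_1^4-6s_1^2s_2+3s_2^2+8s_1s_3-6s_4),\\
e_5&=\tfrac1{120}(s_1^5-10s_1^3s_2+15s_1s_2^2+20s_1^2s_3-20s_2s_3-30s_1s_4+24s_5),
\end{align*}
valid since $j\le r$ throughout. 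So the whole problem reduces to evaluating the $s_j$ with $j\le r$ in closed form.

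First I would invoke the single-form expansions displayed just before the lemma: for $m\le5$ they write $\ia_p(f)^m$ as a $\Z[q]$-linear combination of $1,\ia_p(f),\dots,\ia_{p^m}(f)$, obtained by iterating the Hecke recursion $\ia_p(f)\ia_{p^{n-1}}(f)=\ia_{p^n}(f)+q\,\ia_{p^{n-2}}(f)$ (legitimate as each $f_i$ is primitive and $p\nmid N$). Summing over $i=1,\dots,r$ and using additivity of Fourier coefficients, $\VS{i}\ia_{p^m}(f_i)=\ia_{p^m}(\sigma)$ and $\VS{i}1=r$, one gets at once
\begin{align*}
s_1&=\ia_p(\sigma),\\
s_2&=\ia_{p^2}(\sigma)+rq,\\
s_3&=\ia_{p^3}(\sigma)+2q\,\ia_p(\sigma),\\
s_4&=\ia_{p^4}(\sigma)+3q\,\ia_{p^2}(\sigma)+2rq^2,\\
s_5&=\ia_{p^5}(\sigma)+4q\,\ia_{p^3}(\sigma)+5q^2\,\ia_p(\sigma),
\end{align*}
with $r=3,4,5$ according to the case; note that $r$ enters only $s_2$ and $s_4$.

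What remains is purely formal: substitute these power sums into the Newton formulas for the $e_j$ needed in each case, expand, and collect. The $q$-dependence then repackages into the abbreviations $A_{m,n}=m\,\ia_{p^2}(\sigma)+nq$ and $A_{l,m,n}=l\,\ia_{p^2}(\sigma)^2+m\,\ia_{p^2}(\sigma)q+nq^2$; for instance, in the case $r=3$ one finds $s_1^3-3s_1s_2+2s_3=\ia_p(\sigma)^3-\ia_p(\sigma)(3\ia_{p^2}(\sigma)+5q)+2\ia_{p^3}(\sigma)=\ia_p(\sigma)^3-\ia_p(\sigma)A_{3,5}+2\ia_{p^3}(\sigma)$, which is $6e_3$, so that $-\tfrac16$ of it is exactly the constant term in the first displayed identity. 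Carrying this through for all the required $e_j$ produces the three polynomial identities, with the value of $r$ surfacing only inside these combined coefficients. There is no conceptual obstacle; the only place that needs care is $r=5$, where five power sums feed into all of $e_1,\dots,e_5$ and the coefficients of the mixed monomials — $\ia_{p^2}(\sigma)\ia_{p^3}(\sigma)$, $\ia_{p^2}(\sigma)^2\ia_p(\sigma)$, $q\,\ia_{p^3}(\sigma)$, and so on — must be tracked accurately before they recombine into the $A$'s, but each claimed coefficient is confirmed by a one-line expansion.
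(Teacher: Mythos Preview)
Your proof is correct and follows essentially the same route as the paper's: compute the power sums $s_j=\sum_i \ia_p(f_i)^j$ via the Hecke recursion for each primitive $f_i$, then feed these into Newton's identities to obtain the elementary symmetric functions. The paper merely records the resulting $s_j$ and the relevant Newton formulas for each $r\in\{3,4,5\}$ without writing out the final substitution step, which you spell out a bit more explicitly.
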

\begin{proof}
For the first assertion, set $a_i=\ia_p(f_i)$ and $s_i=a_1^i+a_2^i+a_3^i$, then we see
\[s_1=\ia_p(\sigma),\6s_2=\ia_{p^2}(\sigma)+3p^{k-1},\6s_3=\ia_{p^3}(\sigma)+2p^{k-1}\ia_p(\sigma),\]
and
\[a_1a_2+a_2a_3+a_3a_1=\z\frac12(s_1^2-s_2),\]
\[a_1a_2a_3=\z\frac16(s_1^3-3s_1s_2+2s_3).\]

For the second assertion, set $s_i=a_1^i+a_2^i+a_3^i+a_4^i$,
then we see
\[s_1=\ia_p(\sigma),\6s_2=\ia_{p^2}(\sigma)+4p^{k-1},\]
\[s_3=\ia_{p^3}(\sigma)+2p^{k-1}\ia_p(\sigma),\6s_4=\ia_{p^4}(\sigma)+3p^{k-1}\ia_{p^2}(\sigma)+8p^{2(k-1)}\]
 and
\[a_1a_2a_3a_4=\z\frac1{24}(s_1^4-6s_1^2s_2+3s_2^2+8s_1s_3-6s_4).\]

For the third assertion, set $s_i=a_1^i+a_2^i+a_3^i+a_4^i+a_5^i$,
then we see
\[s_1=\ia_p(\sigma),\6s_2=\ia_{p^2}(\sigma)+5p^{k-1},\]
\[s_3=\ia_{p^3}(\sigma)+2p^{k-1}\ia_p(\sigma),\6s_4=\ia_{p^4}(\sigma)+3p^{k-1}\ia_{p^2}(\sigma)+10p^{2(k-1)},\]
\[s_5=\ia_{p^5}(\sigma)+4p^{k-1}\ia_{p^3}(\sigma)+5p^{2(k-1)}\ia_p(\sigma),\]
and
\[a_1a_2a_3a_4a_5=\z\frac1{120}(s_1^5-10s_1^3s_2+20s_1^2s_3+15s_1s_2^2-30s_1s_4-20s_2s_3+24s_5).\]
\end{proof}

\begin{Prop}With $P_k=\cP_k(8)/\Delta_8$, $d=2^6\Delta_2$ and $G=\iC_2^2-128\alpha_2$, we get
\[P_4=\{1\}^{(2)},\]
\[P_6=\{\iC_2\}^{(2)},\]
\[P_8=\big\{\iC_2^2,G\big\}^{(2)},\]
\[P_{10}=\big\{\iC_2^3,\iC_2G\big\}^{(2)},\]
\[P_{12}=\big\{\iC_2^2G,\iC_2^4-(1\pm\sq{109})d\big\}^{(2)},\]
\[P_{14}=\z\big\{\iC_2^3G,\iC_2(\iC_2^4+(5\pm\sq{781})d)\big\}^{(2)},\]
\[P_{16}=\z\big\{G(\iC_2^4-6(1\pm8)d),\iC_2^2(\iC_2^4-2(17\pm5\sq{58})d)\big\}^{(2)},\]
\[P_{18}=\z\big\{\iC_2G(\iC_2^4+18(5\pm\sq{114})d),\iC_2^3(\iC_2^4-2(5\pm2\sq{2146})d)\big\}^{(2)},\]
\[P_{20}\supset\z\big\{\iC_2^2G(\iC_2^4-3(73\pm5\sq{1453})d)\big\}^{(2)},\]
\[P_{22}\supset\z\big\{\iC_2^3G(\iC_2^4-3(275\pm\sq{358549})d)\big\}^{(2)}.\]
\end{Prop}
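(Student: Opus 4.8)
The plan is to follow the pattern of the proofs of Propositions 5--9, now also invoking the cubic, quartic and quintic identities of Lemma 10, which are unavoidable because $\cP_k(8)$ can have as many as five members. First I would record the standing facts. By Theorem 3, $\cS_k(8)=\Delta_8\cM_{k-4}(8)$, and by Proposition 1 the space $\cM_{k-4}(8)$ is $(k-3)$-dimensional with basis $\{\iF_4^a\alpha_8^b\mid a+b=k-4\}$; since $\iF_4=\iF_4^{(2)}+4\alpha_8$ it is equally $\{(\iF_4^{(2)})^c\alpha_8^d\mid c+d=k-4\}$, and because $\alpha_8^{k-4}$ has order $k-4$ at $\infty$ and no element of $\cM_{k-4}(8)$ has larger order, a cusp form in $\cS_k(8)$ is determined by its first $k-3$ Fourier coefficients. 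Since $2^2\mid8$ we have $\ia_2(f)=0$, hence $\ia_{2^j}(f)=\ia_2(f)^j=0$ for $j\ge1$ and, by multiplicativity, $\ia_n(f)=0$ for all even $n$, for every $f\in\cP_k(8)$ and for $\Delta_8\in\cP_4(8)$; thus $f/\Delta_8\in\C[[q^2]]$, which is the source of the outer $(\,\cdot\,)^{(2)}$. Finally $\iC_2,\alpha_2\in\cM(2)$ give $\iC_2^{(2)},\alpha_2^{(2)}\in\cM(4)\subseteq\cM(8)$, so every displayed element is visibly $\Delta_8$ times an element of $\cM_{k-4}(8)$, hence a cusp form in $\cS_k(8)$; the task is to show these are exactly the primitive forms.

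Fix an even $k$, put $r=\tfrac14(k-1)+\tfrac14(-1)^{k/2}=\#\cP_k(8)$ (the formula of Section 2), and note $8^\times=1$, so $\cP_k(8;1)=\cP_k(8)$ and $\cP_k^1(8)=\emptyset$: a single family occurs. Write $\cP_k(8)=\{f_1,\dots,f_r\}$ and $\sigma=f_1+\cdots+f_r$; by \cite{S} every $\ia_l(\sigma)$ is computable. If $r=1$ (weights $4,6$) then $f_1=\sigma$, and matching its first $k-3$ coefficients against $\Delta_8\cM_{k-4}(8)$ gives the stated form. If $r\ge2$, I would first pin down the Galois-orbit structure of $\cP_k(8)$: for an odd prime $p$, feed the known $\ia_{p^j}(\sigma)$ ($1\le j\le r$) into the $r$-variable identity of Lemma 10 (for $r=2$, apply Lemma 4 to $\{f_1,f_2\}$, whose half-sum is $\tfrac12\sigma$), obtaining the characteristic polynomial $\VP{i=1}^r(X-\ia_p(f_i))$ of $T_p$ on $\cS_k^0(8)$ with explicit rational coefficients, and factor it over $\Q$. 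As $\{f_1,\dots,f_r\}$ is $\mathrm{Gal}(\overline{\Q}/\Q)$-stable and $f_i\mapsto\ia_p(f_i)$ is equivariant, the $\Q$-irreducible factors --- checked for several primes $p$ for consistency --- are exactly the contributions of the individual orbits: for $k\le18$ one sees only linear and irreducible quadratic factors (orbit sizes $1$ and $2$), whereas for $k=20,22$ the quintic in addition acquires an irreducible cubic factor, an orbit of size $3$ that I would not resolve --- hence the inclusions in the last two assertions.

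Then I would identify the forms one orbit at a time. A linear factor $X-a$ of this $T_p$-polynomial gives $\ia_p(f_i)=a$ for a rational newform $f_i$, whence, with $\ia_{2^j}(f_i)=0$ and the Hecke recursion $\ia_p(f_i)\ia_{p^{m-1}}(f_i)=\ia_{p^m}(f_i)+p^{k-1}\ia_{p^{m-2}}(f_i)$, all $\ia_n(f_i)$, and a coefficient match gives $f_i/\Delta_8$ (this is the step needed for the lone rational newforms, occurring only at $k=12,14$). An irreducible quadratic factor $X^2-t_pX+n_p$ gives $\ia_p(f)+\ia_p(g)=t_p$ and $\ia_p(f)\ia_p(g)=n_p$ for the corresponding conjugate pair $\{f,g\}$; working in the quadratic field attached to the pair (its squarefree discriminant is a $\mathrm{Gal}$-invariant, hence the same for all $p$) and using $\ia_{2^j}(f)=\ia_{2^j}(g)=0$ with multiplicativity, all coefficients of $\tau:=\tfrac{f+g}2$ become computable (equivalently $\tau=\tfrac12(\sigma-\sum_{\text{other }i}f_i)$ once the other orbits are done). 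With $\psi:=\tfrac{f-g}2$ one has $\ia_1(\psi)=\ia_2(\psi)=0$, and Lemma 4 gives $\ia_p(\psi)^2=-\ia_p(\tau)^2+\ia_{p^2}(\tau)+p^{k-1}$, $\ia_{p^2}(\psi)=2\ia_p(\tau)\ia_p(\psi)$, $\ia_{lm}(\psi)=\ia_l(\tau)\ia_m(\psi)+\ia_l(\psi)\ia_m(\tau)$ for coprime $l,m$, and $\ia_l(\psi)\ia_m(\psi)=\ia_{lm}(\tau)-\ia_l(\tau)\ia_m(\tau)$, which pin $\psi$ down in $\cS_k(8)$ up to its overall sign (the sign merely interchanging $f$ and $g$); matching $\tau$ and $\psi$ against $\Delta_8\cM_{k-4}(8)$ yields $f/\Delta_8=(\tau\pm\psi)/\Delta_8$. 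The pairs of rational newforms at $k=8,10$ and the rational pair at $k=16$ are just the case where $\psi$ turns out rational; the ``$1\pm8$'' in $P_{16}$ records exactly this.

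The main obstacle, I expect, is wholly computational: carrying out the $\Q$-factorizations of the Lemma-10 polynomials for enough primes, and then solving the coefficient-matching linear systems in $\cS_k(8)$, where the integers grow rapidly with $k$ (as the displayed formulas already show). The structural reason the argument stops at $k=20,22$ is that the $T_p$-polynomial there acquires an \emph{irreducible cubic} factor rather than a quadratic one: recovering the remaining primitive forms would require adjoining the roots of a cubic in place of a single square root, which lies outside the format of the statement.
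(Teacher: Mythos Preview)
Your plan coincides with the paper's: compute $\prod_i(X-\ia_p(f_i))$ from the power sums $\ia_{p^j}(\sigma)$ via Lemma~10, factor over $\Q$ to read off Galois orbits, and then for each orbit of size~$\le 2$ recover $\tau=(f+g)/2$ and $\psi=(f-g)/2$ by Lemma~4 and coefficient matching in $\cS_k(8)=\Delta_8\cM_{k-4}(8)$. The paper carries this out for $k=18$ as the representative case.

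There is one step where your outline is incomplete, precisely at $k=18$. Here $r=4$ and the orbits are \emph{two} conjugate pairs (fields $\Q(\sqrt{114})$ and $\Q(\sqrt{2146})$), with no rational newforms; so your fallback ``$\tau=\tfrac12(\sigma-\sum_{\text{other}}f_i)$ once the other orbits are done'' is circular. Your discriminant argument correctly assigns, for each odd prime $p$, which quadratic factor of the $T_p$-polynomial belongs to which orbit, and hence gives $\ia_p(\tau_1)$ and (via Newton) $\ia_{p^j}(\tau_1)$. But to pin $\tau_1$ down in the $15$-dimensional space $\cS_{18}(8)$ you also need $\ia_{15}(\tau_1)=\tfrac12\bigl(\ia_3(f_1)\ia_5(f_1)+\ia_3(f_2)\ia_5(f_2)\bigr)$, and knowing only the unordered sets $\{\ia_3(f_1),\ia_3(f_2)\}$ and $\{\ia_5(f_1),\ia_5(f_2)\}$ leaves two possible values for this trace. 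The paper closes this by also computing $\prod_{f\in\cP_{18}(8)}(X-\ia_{15}(f))$ --- the required power sums $\sum_i\ia_{15}(f_i)^j$ reduce to values $\ia_n(\sigma)$ since e.g.\ $\ia_{15}(f)^2=(\ia_9(f)+3^{17})(\ia_{25}(f)+5^{17})$ --- and then reading off $\ia_{15}(\tau_1)$ from the factor whose discriminant lies in $114\cdot(\Q^\times)^2$. The same cross-prime check (stated in the paper as ``$f\in\cP_i\Rightarrow\ia_5(f)\in K_i$'', and similarly for $7,11,13$) is what makes your phrase ``checked for several primes for consistency'' effective; you should make this explicit, since the discriminant match alone does not fix the relative sign of the irrational parts of $\ia_3(f_1)$ and $\ia_5(f_1)$.
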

\begin{proof}We show the asserion for $P_{18}$: First, note that $\#P_{18}=4$. We put $K_1=\Q(\sq{114})$, $K_2=\Q(\sq{2146})$ and
\[\cP_i=\{f\in\cP_{18}(8) \,|\, \ia_3(f)\in K_i\}\]
for $i=1,2$. We see
\[\VP{f\in\cP_{18}(8)}(X-\ia_3(f))=(X^2-11592X-117696240)(X^2+952X-140413680),\]
by the above Lemma, and thus $\#\cP_1=\#\cP_2=2$. Next, we see
\begin{align*}\VP{f\in\cP_{18}(8)}(\s{X-\ia_5(f)})&=(\s{X^2+791924X-542778388700})(\s{X^2+53620X-505586145500}),\\
\VP{f\in\cP_{18}(8)}(\s{X-\ia_{15}(f)})&=(\s{X^2+25165411920X+63882975503248488000})\\[-10pt]
&\6\cdot(\s{X^2-16902353840X+70991211246670440000}),\end{align*}
and thus
\[f\in\cP_i \Longrightarrow \ia_5(f)\in K_i.\]
Similar results are held for $\ia_7(f),\ia_{11}(f),\ia_{13}(f)$. We see
\begin{align*}\big\{\z\frac{f+g}2 \,\big|\, \cP_1=\{f,g\}\big\}&\subset\cS_{18}(8)\cap(\s{q+5796q^3-395962q^5-9466296q^7+55743309q^9}\\[-2pt]
&\6\s{-235385620q^{11}-1251761714q^{13}-12582705960q^{15}+\C[[q]]q^{17}})\\
&=\{\Delta_8(\iC_2G(\iC_2^4+18\cdot5d))^{(2)}\},\end{align*}
\begin{align*}\big\{\z\frac{f-g}2 \,\big|\, \cP_1=\{f,g\}\big\}&\subset\{\phi\in\cS_{18}(8) \,|\, \s{\ia_1(\phi)=0,\,\ia_{2n}(\phi)=0\text{ for }n\ge1,\,\ia_3(\phi)^2=1152^2\cdot114}\\[-2pt]
&\6\s{\ia_5(\phi)=-68\ia_3(\phi),\,\ia_7(\phi)=-1582\ia_3(\phi),\,\ia_9(\phi)=11592\ia_3(\phi),}\\[-2pt]
&\6\s{\ia_{11}(\phi)=-10269\ia_3(\phi),\,\ia_{13}(\phi)=49948\ia_3(\phi),\,\ia_{15}(\phi)=-790090\ia_3(\phi)}\}\\
&=\{\pm18\sq{114}\Delta_8(\iC_2Gd)^{(2)}\}.\end{align*}
Similarly, we may decide on $\cP_2$.
\end{proof}

\begin{Prop}With
\[P_{k;0}=(\cP_k^0(9)\cap\C[[q^3]]q)/\Delta_9,\]
\[P_{k;*}=(\cP_k^0(9)\setminus\C[[q^3]]q)/\Delta_9,\]
$d_1=6^3\Delta_1$ and $d_9=3\Delta_9^{(\frac13)}$, we get
\[P_{4;0}=\{1\},\]
\[P_{10;0}=\{\iE_6\}^{(3)},\]
\[P_{16;0}=\{\iE_4^3-155d_1\}^{(3)},\]
\[P_{22;0}=\{\iE_6(\iE_4^3-9710d_1)\}^{(3)},\]
\[P_{28;0}=\{\iE_4^6-621385\iE_4^3d_1-8049950d_1^2\}^{(3)},\]
\[P_{34;0}=\{\iE_6(\iE_4^6-39768220\iE_4^3d_1+67950520d_1^2)\}^{(3)},\]
\[P_{40;0}=\{\iE_4^9-2545165815\iE_4^6d_1+1186765957380\iE_4^3d_1^2-3602855202920d_1^3\}^{(3)},\]
\begin{align*}P_{46;0}&=\{\iE_6(\iE_4^9-162890611530\iE_4^6d_1-337997152434120\iE_4^3d_1^2+250735128034600d_1^3)\}^{(3)},\end{align*}
\begin{align*}P_{52;0}&=\{\iE_4^{12}-10424999137445\iE_4^9d_1-40705082453455050\iE_4^6d_1^2\\[-3pt]
&\6+261266383110843400\iE_4^3d_1^3-23007109783000799000d_1^4\}^{(3)},\end{align*}
\begin{align*}P_{58;0}&=\{\iE_6(\iE_4^{12}-667199944795640\iE_4^9d_1+51521275253851509840\iE_4^6d_1^2\\[-3pt]
&\6-209888117459184763400\iE_4^3d_1^3-15677307231304614533000d_1^4)\}^{(3)},\end{align*}
\[P_{8;*}=\big\{\iE_4\pm2\sq{10}d_9\big\}^{(3)},\]
\[P_{12;*}=\big\{\iE_4(\iE_4\pm2\sq{70}d_9)\big\}^{(3)},\]
\[P_{14;*}=\big\{\iE_6(\iE_4\pm4\sq{55}d_9)\big\}^{(3)},\]
\[P_{16;*}=\big\{\iE_4^3+3200d_9^3\pm6\sq{370}\iE_8d_9\big\}^{(3)},\]
\[P_{18;*}=\big\{\iE_{10}(\iE_4\pm4\sq{910}d_9)\big\}^{(3)},\]
\begin{align*}P_{20;*}&=\big\{\iE_4(\iE_4^3+20(367+v)d_9^3)\pm\sq{20095+15v}(2\iE_4^3+(913-v)d_9^3)d_9 \,\big|\, v^2=9\cdot158041\big\}^{(3)},\end{align*}
\[P_{22;*}=\big\{\iE_6(\iE_4^3+70400d_9^3\pm12\sq{3085}\iE_8d_9)\big\}^{(3)},\]
\begin{align*}P_{24;*}&=\big\{\iE_4\big(\iE_4(\iE_4^3+100(1769+v)d_9^3)\\[-3pt]
&\6\pm5\sq{14629+3v}(2\iE_4^3-(1463-v)d_9^3)d_9\big) \,\big|\, v^2=9\cdot1439881\big\}^{(3)},\end{align*}
\begin{align*}P_{26;*}&=\big\{\iE_6\big(\iE_4(\iE_4^3+160(7955+v)d_9^3)\\[-3pt]
&\6\z\pm4\sq{471670+30v}(\iE_4^3-\frac5{17}(235+v)d_9^3)d_9\big) \,\big|\, v^2=9\cdot13716145\big\}^{(3)},\end{align*}
\begin{align*}P_{28;*}&=\big\{\iE_4^6+20(135563+33v)\iE_4^3d_9^3+25600(6367-3v)d_9^6\\[-3pt]
&\6\pm3\sq{640195+55v}\iE_8(2\iE_4^3+(-10367+3v)d_9^4)d_9 \,\big|\, v^2=9\cdot5173169\big\}^{(3)},\end{align*}
\begin{align*}P_{30;*}&=\big\{\iE_{10}\big(\iE_4(\iE_4^3+40(27491+v)d_9^3)\\[-3pt]
&\6\z\pm\sq{15737830+30v}(2\iE_4^3+\frac5{19}(469469-v)d_9^3)d_9\big) \,\big|\, v^2=9\cdot27539759809\big\}^{(3)},\end{align*}
\begin{align*}P_{34;*}&=\big\{\iE_6\big((\iE_4^6+80(3945461+3v)\iE_4^3d_9^3-1408000(32939-3v)d_9^6)\\[-3pt]
&\6\pm3\sq{13203805+5v}\iE_4^2(4\iE_4^3+5(31339-3v)d_9^3)d_9\big)\,\big|\, v^2=9\cdot159012841\big\}^{(3)}.\end{align*}
\end{Prop}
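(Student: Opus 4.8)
The plan is to run the template of the earlier propositions once more, the new organising device being the twist involution $f\mapsto f\otimes\rho_3$ on $\cP_k^0(9)$. Twisting by $\rho_3$ maps $\cP_k(9)$ into itself and $\cP_k^1(9)$ into itself, hence restricts to an involution of $\cP_k^0(9)$; and because $3^2\mid9$ one has $\ia_3(f)=0$, so $\ia_n(f)=0$ whenever $3\mid n$, and therefore $f$ is fixed by the involution exactly when $\ia_n(f)=0$ for every $n\equiv2\pmod3$, i.e.\ exactly when $f\in\C[[q^3]]q$. Thus $\cP_k^0(9)$ is the disjoint union of its fixed set $\cP_k^0(9)\cap\C[[q^3]]q$ --- which, divided by $\Delta_9$, gives the sets $P_{k;0}$ --- together with a family of two-element orbits $\{f,f\otimes\rho_3\}$, giving the sets $P_{k;*}$. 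From the table, from \cite{S}, and from the explicit list $\cP_k^1(9)=\{g\otimes\rho_3\mid g\in\cP_k(1)\cup\cP_k(3)\}$ one knows $\#\cP_k^0(9)$ and all Fourier coefficients of $\sum\cP_k^0(9)$; a parity comparison already shows the fixed set is empty unless $k\equiv4\pmod6$.

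For the fixed set (the case $k\equiv4\pmod6$): a form fixed by $\otimes\rho_3$, primitive of level $9$ and supported on $n\equiv1\pmod3$, has CM by $\rho_3$, so it is the CM newform of weight $k$ and level $9$ attached to the Hecke character of $\Q(\sqrt{-3})$ of conductor $(\sqrt{-3})$ and $\infty$-type $z^{k-1}$; there is exactly one such, hence $\#P_{k;0}=1$, and its $q$-expansion is given explicitly by that character. Since $F/\Delta_9$ then lies in $\cM_{k-4}(1)^{(3)}$ (this is why the formulas are polynomials in $\iE_4,\iE_6$ and $d_1=6^3\Delta_1$, with $\Delta_9=(\eta^8)^{(3)}$), the assertions $P_{4;0},\dots,P_{58;0}$ follow by the $q$-expansion comparison of the earlier proofs: the listed candidate minus $F$ lies in $\cS_k(9)\cap\C[[q]]q^r=\{0\}$ for $r$ exceeding $\dim\cS_k(9)=k-3$.

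For an orbit put $\sigma=\tfrac12(f+f\otimes\rho_3)$ and $\phi=\tfrac12(f-f\otimes\rho_3)$, supported respectively on $n\equiv1$ and $n\equiv2\pmod3$. Since $\ia_p(f\otimes\rho_3)=\rho_3(p)\ia_p(f)$ for $p\neq3$, a $\rho_3$-pair has equal $p$-th coefficients when $p\equiv1\pmod3$ and opposite ones when $p\equiv2\pmod3$, while a fixed form $F$ has $\ia_p(F)=0$ for $p\equiv2\pmod3$. Inserting the known $\ia_{p^j}(\sum\cP_k^0(9))$ into the cubic, quartic, or quintic formula of the last Lemma, the resulting explicitly computed polynomial $\prod_{h\in\cP_k^0(9)}(X-\ia_p(h))$ therefore equals $(X-\ia_p(F))^{\varepsilon}\prod_j(X-\ia_p(\sigma_j))^2$ for $p\equiv1\pmod3$ and $X^{\varepsilon}\prod_j(X^2-\ia_p(\phi_j)^2)$ for $p\equiv2\pmod3$, with $\varepsilon\in\{0,1\}$; factoring recovers every $\ia_p(\sigma_j)$, $\ia_p(F)$ and $\ia_p(\phi_j)^2$. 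In the two-orbit weights ($k=20,24,26,30$, and $k=28,34$ where a fixed form is also present) $\sigma_1,\sigma_2$ come out as conjugates over a real quadratic field, the origin of the parameter $v$; in the one-orbit weights ($k=8,12,14,18,22$) one may simply take $\sigma=\tfrac12\sum\cP_k^0(9)$. Knowing enough coefficients of each $\sigma_j$ identifies it by $q$-expansion comparison; then $\ia_1(\phi_j)=0$, $\ia_2(\phi_j)^2=\ia_4(\sigma_j)+2^{k-1}$, and multiplicativity together with the Hecke relations express every coefficient of $\phi_j$ as a rational multiple of $\ia_2(\phi_j)$ (using that $\ia_m(\sigma_j)=\ia_m(f_j)$ for $m\equiv1\pmod3$), so $\phi_j$ is determined up to the sign $\pm$; matching then gives the stated $P_{k;*}$.

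The main difficulty is not conceptual. The genuine preliminary input is structural: that the fixed set has exactly one element when $k\equiv4\pmod6$ (the parity half is free, the rest needs the $V_3$ old/new theory at level $9$, equivalently the classification of level-$9$ CM newforms), and that all of $F,\sigma_j,\phi_j$ have the asserted level-one shape. After that, the real work is the weight-by-weight computation through $k=58$: evaluating \cite{S}, forming and factoring the polynomials, solving for the $\phi_j$, and checking the $q$-expansion matches so as to confirm every integer coefficient and every radicand. For the high-weight two-orbit cases --- e.g.\ $P_{30;*}$ with $v^2=9\cdot27539759809$ and $P_{34;*}$ with $v^2=9\cdot159012841$ --- these numbers are large and the bookkeeping is delicate, though entirely mechanical.
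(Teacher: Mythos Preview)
Your proposal is correct and shares the paper's computational core: form the polynomials $\prod_{h\in\cP_k^0(9)}(X-\ia_p(h))$ via the Lemma, factor them, separate the forms into pairs, and pin each down by $q$-expansion comparison after using the Hecke recursion to generate enough coefficients. The paper's own proof (illustrated only for $k=20$) is purely computational: it records that $\prod(X-\ia_2(f))$ and $\prod(X-\ia_5(f))$ are biquadratic while $\prod(X-\ia_7(f))$ and $\prod(X-\ia_{10}(f))$ are perfect squares, then groups the four forms by the value of $\ia_2(f)^2$ (this is the paper's $v$) and reads off $\ia_4,\ia_5^2,\ia_7,\ia_8,\ia_{10},\ia_{16},\dots$ from the Hecke relations --- it never says \emph{why} those polynomials have these shapes. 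Your organising device, the twist involution $f\mapsto f\otimes\rho_3$ on $\cP_k^0(9)$, predicts all of this a priori: two-element orbits force $\pm$-paired roots at $p\equiv2\pmod3$ and doubled roots at $p\equiv1\pmod3$, and the grouping by $\ia_2(f)^2$ is exactly the grouping by orbit. You also identify $P_{k;0}$ with the CM locus and name the unique fixed form via its Hecke character, whereas the paper would simply detect it as the factor $X$ in the $\ia_2$-polynomial. Your route buys a uniform structural explanation and a canonical pairing; the paper's route buys freedom from any Atkin--Li or CM input beyond what is stated in the introduction. Once the pairing is fixed, the two arguments coincide line for line.
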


\begin{proof}We show the asserion for $\cP=\cP_{20}^0(9)$: We see $\#\cP=4$ and
\begin{align*}\VP{f\in\cP}(X-\ia_2(f))&=X^4-1446840X^2+108573696000,\\
\VP{f\in\cP}(X-\ia_5(f))&=X^4-80271385943040X^2+868903477448377525862400000,\\
\VP{f\in\cP}(X-\ia_7(f))&=(X^2-83136040X-16216397509785200)^2,\\
\VP{f\in\cP}(X-\ia_{10}(f))&=(X^2+1886538240X-9712881241620480000)^2,\end{align*}
etc. Put
\[\cP_v=\{f\in\cP_{20}(9) \,|\, \ia_2(f)^2=6^2(20095+15v)\},\]
with $v^2=9\cdot158041$, then we see $\#\cP_v=2$. For $f\in\cP_v$, we see
\begin{align*}\ia_4(f)&=\ia_2(f)^2-2^{19}=199132+1620v,\\
\ia_8(f)&=\ia_2(f)^3-2^{20}\ia_2(f)=(-325156+1620v)\ia_2(f),\\
\ia_{16}(f)&=\ia_2(f)^4-3\cdot2^{19}\ia_4(f)-2^{39}=75135928864-204158880v,\end{align*}
\begin{align*}\ia_5(f)^2&=40135692971520-68518586880v,\\
\ia_{10}(f)&=-943269120-8190720v,\end{align*}
\[\ia_7(f)=41568020+336960v,\]
etc. Some calculations complete the proof.
\end{proof}

\end{document}